\renewcommand{\mathcal}[1]{{\mathscr#1}}
\newtheorem{theorem}{Theorem}[section]
\newtheorem{lemma}[theorem]{Lemma}
\newtheorem{prop}[theorem]{Proposition}
\newtheorem*{theorem*}{Theorem}
\newtheorem*{propLuZhu*}{Corollary 1 of \cite{LuZhu}}
\theoremstyle{definition}
\theoremstyle{remark}
\newtheorem{remark}[theorem]{Remark}
\numberwithin{equation}{section}
\newcommand{\R}{{\mathbb R}}
\renewcommand{\leq}{\leqslant}
\renewcommand{\le}{\leqslant}
\renewcommand{\geq}{\geqslant}
\renewcommand{\ge}{\geqslant}
\renewcommand{\epsilon}{\varepsilon }
\newlength{\defbaselineskip}
\newcommand{\setlinespacing}[1]
           {\setlength{\baselineskip}{#1 \defbaselineskip}}
\author[B. Barrios]{B. Barrios}
	\address{B. Barrios \hfill\break\indent
		Departamento de An\'{a}lisis Matem\'{a}tico,
		Universidad de La Laguna\hfill 
		\break \indent C/. Astrof\'{\i}sico Francisco S\'{a}nchez s/n, 
		38200 -- La Laguna, SPAIN}
	\email{bbarrios@ull.es}
\author[A. Quaas]{A. Quaas}
	\address{A. Quaas\hfill\break\indent
		Departamento de Matem\'{a}tica, \hfill\break\indent
		Universidad T\'ecnica Federico Santa Mar\'{\i}a
		\hfill\break\indent  Casilla V-110, Avda. Espa\~na, 
		1680 -- Valpara\'{\i}so, CHILE.}
	\email{{\tt alexander.quaas@usm.cl}}
\begin{document}

\subjclass[2010]{35B50, 35B53, 35S15, 47G20}

\keywords{Non local operators, fractional Laplacian, H\'enon equation, Moving spheres}

\thanks{}

\title[Nonlocal H\'enon problem]{The sharp exponent in the study of the nonlocal H\'enon equation in $\mathbb{R}^{N}$. A Liouville theorem and an existence result}

\begin{abstract}
We consider the nonlocal H\'enon equation 
$$(-\Delta)^s u= |x|^{\alpha} u^{p},\quad \mathbb{R}^{N},$$
where $(-\Delta)^s$ is the fractional Laplacian operator with $0<s<1$, $-2s<\alpha$, $p>1$ and $N>2s$. We prove a nonexistence result for positive solutions in the optimal range of the nonlinearity, that is, when
$$1<p<p^*_{\alpha, s}:=\frac{N+2\alpha+2s}{N-2s}.$$
Moreover, we prove that a bubble solution, that is a  fast decay positive radially symmetric solutions, exists when $p=p_{\alpha, s}^{*}$.
\end{abstract}

\maketitle


\section{Introduction}

Problems with non local diffusion that involve the fractional Laplacian operator, and other integro-differential operators, have been intensively studied in the last years since they appear when we try to model different physical situations as anomalous diffusion and quasi-geostrophic flows, turbulence and
water waves, molecular dynamics and relativistic quantum mechanics of stars
(see \cite{BoG,Co} and references). They also appear in mathematical finance (cf. \cite{A,CoT}), elasticity problems \cite{signorini}, obstacle problems \cite{BFR15,BFR16}, phase transition \cite{AB98} and crystal dislocation \cite{dfv, toland} among others.
\\

The aim of this work is to study the existence of positive solutions of the nonlocal H\'enon equation 
\begin{equation}\label{Henon} 
(-\Delta)^s u= |x|^{\alpha} u^{p},\quad \mathbb{R}^{N},\, N>2s,
\end{equation}
Here we will assume $\alpha>-2s$. In the case $\alpha\leq-2s$ it can be proved that there are not solutions of \eqref{Henon} (see Remark \ref{alal} ii) below).

The operator $(-\Delta)^s$, $0<s<1$, is the well-known \emph{fractional laplacian}, which
is defined on smooth functions as
\begin{equation}\label{operador}
(-\Delta)^s u(x) =  a_{N,s}\int_{\R^N} \frac{u(x)-u(y)}{|x-y|^{N+2s}} dy,
\end{equation}
where $a_{N,s}$ is a normalization constant that is usually omitted for brevity. The integral in \eqref{operador} 
has to be understood in the principal value sense, that is, as the limit as $\epsilon\to 0$ of the 
same integral taken in {$\R^N\setminus B_\epsilon(x)$, i.e, the complementary of the ball of center $x$ and radius $\epsilon$}.  It is clear that the fractional laplacian operator is well defined for functions that belong, for instance, to $\mathcal{L}_{2s}\cap \mathcal{C}^{1,1}_{loc}$ where
$$\mathcal{L}_{2s}:=\{u:\mathbb{R}^{N}\to \mathbb{R}:\, \int_{\mathbb{R}^{N}}\frac{|u(x)|}{1+|x|^{N+2s}}<\infty\}.$$
 {Throughout all the work we will called this solutions {\it strong or pointwise}. Of course another kind of solutions can be considered. The weaker one are the {\it distributional} solutions $u\in \mathcal{L}_{2s}$ introduced in \cite{PhS} where the equation is satisfied using the duality product
$$\langle (-\Delta)^{s}u,\varphi\rangle =\int_{\mathbb{R}^{N}} u(-\Delta)^{s}\varphi\quad \varphi\in\mathcal{S}.$$ We will specified along the work for which kind of solutions our results can be proved (see Theorems \ref{Liouville}-\ref{existencia}).} We suggest to the reader to see, for instance, \cite{guida, PhS, Stein} to read more about the basic properties of the operator and the normalization constant.
\medskip

{In the local case, that is, when $s=1$, the H\'enon equation has been introduced in modes of astrophysics (see for instance \cite{Henon}). A  close related model was introduced by Matukuma in 1930 with a different weight function $K(|x|)$ instead of $|x|^\alpha$. The case $\alpha=0$ was studied by Emden-Folwer-Lane and, thus, in this case, \eqref{Henon} is sometimes called the Emden-Folwer-Lane equation. We notice that the solutions of the H\'enon equation, and also Emden-Folwer-Lane equation, in the critical case, have a relationship with the best constants in Sobolev-Hardy inequality (see for example \cite{Lieb}). Moreover these type of problems are also connected with the mass transport and Yamabe problem (see for instance \cite{Villani}, \cite{Yamabe} and \cite{Schoen}). Since the previous one are cornerstone problems in Analysis it is hard to give a exhaustive list of references on the topic, we just mention, for example, \cite{Aubin}, \cite{talenti}, \cite{Ni},  \cite{GS}, \cite{G} ,\cite{CGS}, \cite{Chen1}, \cite{ChenLiOu}, and \cite{Stein1}.}
 
 \medskip
 
In the nonlocal framework the H\'enon equation has been studied in, for instance, \cite{DouZhou}, \cite{Yang}, \cite{LuZhu}, \cite{wei}, \cite{sire} and \cite{li}.

\medskip

Returning with the local case  ($s=1$) and the H\'enon equation, the nonexistence of positive solutions of \eqref{Henon} when $p$ is subcritical, that is,
$$1<p<\frac{N+2\alpha+2}{N-2},$$ and $\alpha> -2$ has been established by Wang, Li and Hong in \cite{WangLiHong}. The method used in \cite{WangLiHong}, that can be also applied to more general weights $K(x)\neq |x|^{\alpha}$, use the moving spheres method and the conclusion is obtained by analyzing the properties of the eigenvalues of the Laplacian. This strategy is the one that we will follow to prove our nonexistence theorem in the nonlocal framework.

\medskip

We also mention that in the recent work \cite{Jorge}, J. Garc\'ia-Meli\'an uses the well know method of moving spheres (see for example \cite{LiZhang}) to deduce that, by proving a monotonicity property, the solutions are stables. This fact allows him to use the Liouville theorem proved in \cite{DancerDuGuo} to conclude. 
\\See also \cite{Biachi, Mitidieri, PhanSouplet} for previous results in which other kind of Liouville theorems were proved in the non-optimal range or for special dimensions. 

Regarding with the critical case $$p=\frac{N+2\alpha+2}{N-2},$$ the situation is completely different when $\alpha>0$ or $-2<\alpha<0$. In fact, for positive values of the parameter, in \cite{Gladiali} was proved that there exists positive solutions that are not radially symmetric. When $\alpha$ is negative the complete classification of all the posible solutions was also stablished in \cite{Jorge} using some partial results done previously in \cite{Dolbeault}. 
\medskip

In the nonlocal case, as far as we known, it is not known a general Liouville result in all the expected range. There are some partial results in \cite{DouZhou} where the authors stablished a non existence result when 
$\alpha>0$ and 
$$\frac{N+\alpha}{N-2s} <p <\frac{N+\alpha+2s}{N-2s}:=\hat{p}.$$ 
Beyond that the range is not optimal, in our opinion, the authors in \cite{DouZhou} have a gap in the proof of their nonexistence result \cite[Theorem 1.2]{DouZhou}. More precisely, in a technical lemma, they used a local inequality, proved in \cite{Mitidieri_local} in the local framework, that does not seems trivially true for the solutions in the nonlocal case. 
Moreover, when $p=\hat{p}$ they affirm in \cite[Theorem 1.3]{DouZhou} that if there exists a nonnegative solution then it has to be radially symmetric around the origin using the work of \cite{LuZhu}. However the result of Lu and Zhu that they cited can be applied only when $\alpha$ is negative, not when $\alpha$ is positive, and $p=p^*_{\alpha}$ (not $p=\hat{p}$), where
\begin{equation}\label{critico}
p^*_{\alpha, s}:=\frac{N+2\alpha+2s}{N-2s}\neq \hat{p}.
\end{equation}

\medskip
As occurs in the local case, we guess that the existence of solutions in the case $p=\hat{p}$ is not expected attended to the complete range $1<p<p^*_{\alpha, s}$ for which the Liouville theorem is expected (see Theorem \ref{Liouville} below). For that reason our first objective is to obtain the following
 
\begin{theorem}\label{Liouville}
If we assume that $0<s<1$, $\alpha>-2s$ and 
$$1<p<p^*_{\alpha, s},$$
then the problem \eqref{Henon} does not have any positive distributional solution.
\end{theorem}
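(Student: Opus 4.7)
\emph{Strategy.} The plan is to run the moving spheres method for the fractional Laplacian, centered at the origin, which is the unique point for which the weight $|x|^{\alpha}$ is invariant under Kelvin inversion. Subcriticality $p<p^{*}_{\alpha,s}$ will appear as a strictly positive ``slack'' exponent of $\lambda$ in the Kelvin-transformed equation; this slack will be exactly what allows the moving spheres to run up to $\lambda=+\infty$, from which a lower decay bound on $u$ will follow that is inconsistent with the integrability enforced by the equation.

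\emph{Regularity and integral representation.} I would first promote the distributional solution to a pointwise one. By standard fractional elliptic regularity and a bootstrap using $|x|^{\alpha}\in L^{\infty}_{\mathrm{loc}}(\R^{N}\setminus\{0\})$, the solution $u$ will be smooth in $\R^{N}\setminus\{0\}$, and the nonlocal strong maximum principle will give $u>0$ everywhere. Combining $u\in\mathcal{L}_{2s}$, positivity, and the equation, I would then derive the Riesz representation
\[ u(x) \;=\; c_{N,s}\int_{\R^{N}}\frac{|y|^{\alpha}u(y)^{p}}{|x-y|^{N-2s}}\,dy, \]
which in particular will yield $|\cdot|^{\alpha}u^{p}\in L^{1}(\R^{N})$ and $u(x)\to 0$ as $|x|\to\infty$.

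\emph{Moving spheres and extraction of decay.} Introduce the Kelvin transform $u_{\lambda}(x) := (\lambda/|x|)^{N-2s}u(\lambda^{2}x/|x|^{2})$. A direct computation will give
\[ (-\Delta)^{s}u_{\lambda}(x) \;=\; (\lambda/|x|)^{\tau_{p}}\,|x|^{\alpha}\,u_{\lambda}(x)^{p},\qquad \tau_{p}:=(N+2\alpha+2s)-p(N-2s)>0, \]
so on $\Sigma_{\lambda}:=\{|x|>\lambda\}$ the difference $w:=u-u_{\lambda}$ satisfies
\[ (-\Delta)^{s}w \;=\; |x|^{\alpha}(u^{p}-u_{\lambda}^{p}) \;+\; \bigl(1-(\lambda/|x|)^{\tau_{p}}\bigr)|x|^{\alpha}u_{\lambda}^{p}, \]
where the last term is strictly positive in $\Sigma_{\lambda}$ (whenever $u_{\lambda}>0$) and constitutes the subcritical boost. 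A start-up argument using the decay of $u$ gives $w\geq 0$ on $\Sigma_{\lambda}$ for all small $\lambda>0$; I then set $\bar\lambda:=\sup\{\lambda>0:w\geq 0\text{ on }\Sigma_{\mu}\text{ for all }0<\mu\leq\lambda\}$. The strict positivity of the boost term, together with the nonlocal strong maximum principle for the Kelvin-antisymmetric profile (recall $w(\lambda^{2}x/|x|^{2})=-(|x|/\lambda)^{N-2s}w(x)$), will rule out any touching of $w$ at zero in $\Sigma_{\bar\lambda}$ for finite $\bar\lambda$, forcing $\bar\lambda=+\infty$. Rewriting $u\geq u_{\lambda}$ as $r^{(N-2s)/2}u(r\omega)\geq r'^{\,(N-2s)/2}u(r'\omega)$ for $0<r'<r$ (with $\lambda=\sqrt{rr'}$), monotonicity in $r$ then gives $u(x)\geq c|x|^{-(N-2s)/2}$ for $|x|$ large. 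Consequently
\[ \int_{|y|\geq 1}|y|^{\alpha}u(y)^{p}\,dy \;\geq\; c^{p}\int_{|y|\geq 1}|y|^{\alpha - p(N-2s)/2}\,dy, \]
and since $p<p^{*}_{\alpha,s}$ together with $N>2s$ gives $\alpha - p(N-2s)/2>-N$, the last integral diverges, contradicting the integrability obtained in the first step.

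\emph{Main obstacle.} The most delicate point will be reaching $\bar\lambda=+\infty$. Although the subcritical slack produces $(-\Delta)^{s}w>0$ strictly in $\Sigma_{\lambda}$ whenever $u_{\lambda}>0$ and $w\geq 0$, the nonlocal strong maximum / Hopf-type principle must be applied to $w$ whose values inside $B_{\lambda}$ are the Kelvin-antisymmetric reflection of its values on $\Sigma_{\lambda}$; this is the standard subtlety of fractional moving spheres, and care is needed to ensure the touching obstruction really propagates past every finite $\bar\lambda$. A secondary difficulty is the interaction of the weight singularity $|x|^{\alpha}$ (for $\alpha<0$) with the fixed point $x=0$ of the Kelvin map, which requires care in the regularity upgrade and in controlling the nonlocal tail of $(-\Delta)^{s}w$ near the origin.
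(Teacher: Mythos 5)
Your moving--spheres skeleton coincides with the paper's: integral representation via the Green function, start-up for small $\lambda$ using a narrow-domain maximum principle, propagation to $\widetilde{\lambda}=\infty$ thanks to the subcritical slack $\left(\lambda/|x|\right)^{\tau_p}$, and the resulting lower bound $u(x)\ge c|x|^{-(N-2s)/2}$ (the paper phrases $w_\lambda=u_\lambda-u\ge 0$ inside $B_\lambda$, you phrase $u\ge u_\lambda$ outside; these are the same statement by Kelvin antisymmetry). The genuine problem is your final contradiction. First, the representation $u(x)=c_{N,s}\int |y|^{\alpha}u(y)^{p}|x-y|^{-(N-2s)}\,dy$ does \emph{not} yield $|\cdot|^{\alpha}u^{p}\in L^{1}(\R^{N})$: since the kernel decays at infinity, finiteness of $u$ at a fixed point only controls the weighted quantity
\begin{equation*}
\int_{\R^{N}}\frac{|y|^{\alpha}u(y)^{p}}{(1+|y|)^{N-2s}}\,dy<\infty .
\end{equation*}
Second, even using this correct weaker integrability together with $u\ge c|x|^{-(N-2s)/2}$, the divergence test reads $\alpha-p(N-2s)/2-(N-2s)\ge -N$, i.e. $p\le 2(\alpha+2s)/(N-2s)$, and this threshold is strictly below $p^{*}_{\alpha,s}$ (their difference in numerators is $N-2s>0$). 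So your one-shot argument leaves the whole window $2(\alpha+2s)/(N-2s)<p<p^{*}_{\alpha,s}$ uncovered; your own computation that $\alpha-p(N-2s)/2>-N$ is correct but is applied against an integrability statement you do not actually have.

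To close this you must do one of two things. The paper's route (following Lin) is to observe that subcriticality plus the decay bound give $|x|^{2s}\cdot|x|^{\alpha}u^{p-1}(x)\to\infty$ --- note that $\alpha+2s-(p-1)(N-2s)/2>0$ exactly when $p<p^{*}_{\alpha,s}$, so this step uses the full subcritical range sharply --- and then to rescale, viewing $u$ as a positive supersolution of $(-\Delta)^{s}v=|x|^{2s}c(x+|x|z)v$ on $B_1$ with $c=|x|^{\alpha}u^{p-1}$, which forces $\lambda_1(B_1)>\sup_{|z|\le 1}|x|^{2s}c(x+|x|z)$ by the characterization of the principal eigenvalue for nonlocal operators; letting $|x|\to\infty$ gives the contradiction. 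The alternative, which would rescue your integrability strategy, is to bootstrap: feed $u\ge c|x|^{-(N-2s)/2}$ back into the Riesz representation to improve the lower bound iteratively (Riesz-potential asymptotics give $u(x)\gtrsim |x|^{2s+\alpha-p(N-2s)/2}$ at each step) until the far-field integral diverges; this is essentially the ``scaling spheres'' argument of Dai--Qin cited in the paper, and it is substantially more work than the sentence you devote to it. As written, the proposal does not prove the theorem on the stated range.
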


Since to prove Theorem \ref{Liouville} we will follows, mainly, the strategy developed in \cite{WangLiHong}, that can be applied to more general weights $K(x)\neq |x|^{\alpha}$, our nonexistence result can be also proved for some more general equations than \eqref{Henon}. However, for simplicity of the exposition, we will consider the particular case $K(x)=|x|^{\alpha}$.

\begin{remark}
Just before submitting this version of the manuscript we found a very recent preprint where the conclusion of our Theorem 1.1 can be deduced (see \cite{Dai}). Nevertheless, although the techniques used by Dai and Qin are similar as the one we develop here (mainly the moving sphere by using the Kelvin transform for the fractional Laplacian) our proof is different and we consider that our arguments and steps are much more simple and direct than the used by Dai and Qin.
\end{remark}

To complement the previous nonexistence result we will study the existence of the solutions when $p=p^*_{\alpha, s}$ for some range of $\alpha$, and the decay estimates for the solutions, that were not known until the date, will be stablished. To explain the type of result that we have obtained in this context we notice that in the local case, writing the Laplacian in radial coordinates, it is very easy to check that the ``bubble" functions
\begin{equation}\label{bbb}
b(x)=C(N,\alpha)\left(\frac{1}{1+|x|^{2+\alpha}}\right)^{\frac{N-2}{2+\alpha}},
\end{equation}
are solutions of \eqref{Henon} when $s=1$. However in the nonlocal case this computation is not at all straightforward because the formula of the fractional Laplacian for radial functions has not a simple form.
Notice that in the nonlocal case when $\alpha=0$ the bubble is given by
$$b(x)=C(N,s)\left(\frac{\lambda}{{\lambda^2}+|x|^{2}}\right)^{\frac{N-2s}{2}},$$
for some $C(N,s),\, \lambda>0$, (see \cite{Lieb}).  We show up here that the computations in \cite{Lieb} cannot be adapted for the case $\alpha\neq 0$ since the Fourier transform of for functions of the form
$$u_{\mu}(x)=(1+|x|^\gamma)^{-\mu},\, {\mu>0},\, 0<\gamma\neq 2,$$
has not the desired expression as occurs when $\gamma=2$ (see Section 4 below). Thus, in order to prove the existence of the ``bubble" solution in the nonlocal case, that is, a radial solution when $p=p^*_{\alpha, s}$, we have to use an alternative approach passing through a Emden-Fowler change of variables that forces us to study an alternative problem similar as the one that appear in \cite{Manuel}. 
For that we made a one-dimensional reduction and we work with a new problem of the form
$$\mathcal{T} \bar{v}(\kappa) + \mathcal{A}_{s,N} \bar{v}(\kappa)= \bar{v}^{p^*_{\alpha,s}}(\kappa),\, \quad \kappa\in\mathbb{R}.$$
Here the operator $\mathcal{T}$ is an integral operator with a kernel whose singularity is similar to the ones of to the fractional Laplacian and that has exponential decay at infinity (see Section 4 below). For the existence of a solution of the new problem, some restriction in case $s<1/2$ appears by the fact that we need to assume that $p^*_{\alpha, s}$ is subcritical in one dimension. For the proof of qualitative properties, such as fast-decay of the bubble solution, i.e., decay as $r^{-N+2s}$; we obtain a suitable and sharp estimate for the function $\bar{v}$.

Summarizing, we have the  following second main result.

\begin{theorem}\label{existencia}
Let us consider $p=p^*_{\alpha, s}$ defined in \eqref{critico}.
\begin{itemize}
\item[a)] If 
$$\mbox{$1/2\leq s<1$, $0<\alpha$,}$$
or
$$\mbox{$0<s<1/2$, $0<\alpha<2s(N-1)/(1-2s)$,}$$
then the problem \eqref{Henon}  admits a classical positive radially symmetric solution $u\in L^{\infty}(\mathbb{R}^{N})\cap C^{\infty}(\mathbb{R}^{N})$ of \eqref{Henon}.

\item[b)] If $0<s<1$, $-2s<\alpha<0$, then there exists a weak (variational)  positive radially symmetric solution $u\in L^\infty(\R^N) \cap H^s_{loc}(\R^N)\cap C^\infty(\R^N \setminus \{0\})$ of \eqref{Henon}.

\item[c)] All radial solution $u$ of \eqref{Henon} are fast decay, that is, there exist constants $c_i>0$, $i=1,\,2$, such that 
$$c_1 r^{-N+2s}\leq u(r)\leq c_2 r^{-N+2s}.$$
\end{itemize}
\end{theorem}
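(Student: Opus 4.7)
The plan is to reduce (1.1) for radial solutions at the critical exponent to a one-dimensional integral equation via an Emden--Fowler change of variables, solve it by variational methods, and then upgrade the qualitative information into sharp decay estimates.

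First, looking for $u(x) = u(|x|)$, I set
$$u(r) = r^{-(N-2s)/2}\,\bar v(\kappa), \qquad \kappa = \log r.$$
The exponent $(N-2s)/2$ is chosen so that the scaling invariance of (1.1) at $p = p^*_{\alpha,s}$ becomes translation in $\kappa$. Using the radial representation of $(-\Delta)^s$ and changing variables, the equation transforms into
$$\mathcal T\bar v(\kappa) + \mathcal A_{s,N}\,\bar v(\kappa) = \bar v^{p^*_{\alpha,s}}(\kappa), \qquad \kappa \in \mathbb R,$$
where $\mathcal T$ is a positive, self-adjoint, translation-invariant integral operator on $\mathbb R$ whose kernel has the same local singularity as the one-dimensional fractional Laplacian of order $s$ and decays exponentially at infinity, and $\mathcal A_{s,N} > 0$ is an explicit constant. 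The weight $|x|^\alpha$ is absorbed through the critical matching of exponents, consistently with (1.4).

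For (a) and (b), I would obtain $\bar v$ as a critical point of
$$J(\bar v) = \tfrac12\langle \mathcal T\bar v, \bar v\rangle_{L^2} + \tfrac{\mathcal A_{s,N}}{2}\|\bar v\|_{L^2}^2 - \tfrac{1}{p^*_{\alpha,s}+1}\|\bar v^+\|_{L^{p^*_{\alpha,s}+1}}^{p^*_{\alpha,s}+1}$$
on the Hilbert space $X$ attached to the quadratic form of $\mathcal T + \mathcal A_{s,N}\,\mathrm{Id}$. Since $\mathcal T$ behaves locally like the one-dimensional fractional Laplacian of order $s$, the embedding $X\hookrightarrow L^{p^*_{\alpha,s}+1}(\mathbb R)$ is available; subcriticality in one dimension demands $p^*_{\alpha,s}<(1+2s)/(1-2s)$ when $s<1/2$, which rearranges precisely to $\alpha < 2s(N-1)/(1-2s)$, explaining the restriction in (a), and imposes no constraint when $s \geq 1/2$. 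A mountain-pass or Nehari minimization working modulo $\kappa$-translations (the only possible source of loss of compactness, since the problem is translation-invariant) produces a positive $\bar v$; bootstrap regularity for subcritical fractional problems yields smoothness and boundedness. Unravelling the change of variables then gives the classical radial solution claimed in (a). For (b), with $\alpha<0$, it is simpler to work directly in the ambient framework: minimise $\tfrac12[u]_{H^s}^2$ on the radial subspace of $H^s(\mathbb R^N)$ under the constraint $\int_{\mathbb R^N}|x|^\alpha(u^+)^{p^*_{\alpha,s}+1}=1$. The weight being locally integrable at $0$ and decaying at infinity, together with radial compactness, restores compactness of the relevant embedding and produces a positive weak solution; regularity only away from the origin reflects the singularity of the weight there and explains the function classes stated in (b).

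For (c), the change of variables translates the desired fast decay $u(r) \sim r^{-(N-2s)}$ into $\bar v(\kappa) \sim e^{-(N-2s)\kappa/2}$ as $\kappa \to +\infty$, and analogously for $\kappa \to -\infty$. I would obtain matching two-sided bounds through the convolution representation $\bar v = G * \bar v^{p^*_{\alpha,s}}$, where $G>0$ is the Green's function of $\mathcal T + \mathcal A_{s,N}\,\mathrm{Id}$: the exponential tail of $G$ yields the upper bound after iteration using the decay of $\bar v^{p^*_{\alpha,s}-1}$, while the positivity of $G$ together with a pointwise lower bound on $\bar v$ over a compact interval yields the matching lower bound. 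The principal technical obstacle is establishing the sharp exponential rate for $G$ — proving that the rightmost pole of the Fourier symbol of $\mathcal T + \mathcal A_{s,N}\,\mathrm{Id}$ in the upper half-plane lies exactly at imaginary part $(N-2s)/2$ — which requires a careful complex-analytic study of that symbol. A secondary obstacle is that the reduction crucially exploits the critical exponent $p = p^*_{\alpha,s}$, so subcritical or supercritical cases would leave an extra power of $r$ that cannot be absorbed into $\mathcal A_{s,N}$, confirming that this Emden--Fowler route is inherently tied to the critical setting.
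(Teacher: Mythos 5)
Your reduction to the one-dimensional problem via $u(r)=r^{-(N-2s)/2}\bar v(\log r)$, and your variational treatment of the transformed equation $\mathcal T\bar v+\mathcal A_{s,N}\bar v=\bar v^{p^*_{\alpha,s}}$, is exactly the route the paper takes, including the subcriticality restriction $\alpha<2s(N-1)/(1-2s)$ when $s<1/2$ and the concentration-compactness handling of $\kappa$-translations. Where you diverge is in (b) and, more importantly, in (c). For (b), you propose an ambient constrained minimization over radial $H^s(\mathbb R^N)$ with weight $|x|^\alpha$; the paper instead reuses the same Emden--Fowler solution for all $\alpha\in(-2s,0)$ and devotes its effort to showing, after the fact, that the pulled-back function is a strong and then a weak solution near the singular point $x=0$ (via a dominated-convergence argument for the principal value and a Plancherel estimate for $u(-\Delta)^s u\in L^1$). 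Your direct ambient approach is essentially the one of Yang and is fine, but the paper's choice has the virtue of treating (a) and (b) through a single construction. For (c), you correctly identify the crux --- the sharp exponential rate for $\bar v$ --- but propose to extract it from a complex-analytic study of the Fourier symbol of $\mathcal T+\mathcal A_{s,N}$, which you yourself flag as the principal technical obstacle. The paper sidesteps this entirely by exploiting the identity $(-\Delta)^s r^{\mu}=c(\mu)r^{\mu-2s}$ for $-N<\mu<2s$: after the Emden--Fowler change, $v_\mu(t)=e^{t(\mu+(N-2s)/2)}$ satisfies $\mathcal T v_\mu+\mathcal A_{s,N}v_\mu=c(\mu)v_\mu$ exactly, so the zeros $c(0)=c(-N+2s)=0$ hand you the two homogeneous solutions $1$ and $e^{-t(N-2s)/2}$ for free, and barriers built from $e^{t(\mu+(N-2s)/2)}$ with $c(\mu)<0$ give both the preliminary decay $u(t)\lesssim e^{(-\,(N-2s)/2+\varepsilon)t}$ and, after one bootstrap, the sharp rate $e^{-t(N-2s)/2}$, via comparison (together with the Moser-type $L^\infty$ bound, which the paper proves first and which you fold into ``bootstrap regularity''). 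In other words, the paper imports the symbol information for $\mathcal T$ from the known $N$-dimensional fractional Laplacian on powers, rather than re-deriving it by contour analysis on the line; this is the key shortcut your proposal is missing. The matching lower bound in the paper is also obtained by comparison, using subsolutions of fast-decay type from the $p$-fractional Sobolev extremal literature, which is in the same spirit as, but more concrete than, your Green's-function positivity argument.
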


To finish this introduction we give some comments respect to the previous existence theorem:
\\

i) Another existence result related with the nonlocal H\'enon equation can be found in, for instance, \cite{Yang} where the author, using some ideas of \cite{Giamp}, proves the existence of a positive, radially symmetric solution when $-2s<\alpha<0$ and $p=p^*_{\alpha, s}$ establishing a refinement of a Sobolev-Hardy inequalities in terms of a Morrey-Campanato space. In addition J. Yang showed that all positive energy solution has to be radially symmetric and strictly decreasing. We notice here that, beside that in \cite{Yang} an existence result like b) was obtained, since our method is different, based on a Emden-Fowler change of variable, and the uniqueness for equations like \eqref{Henon} is still nowadays an interesting problem, is not clear if both solutions coincide or not.
\\

ii) We expected that  
$$\lim_{r \to \infty}u(r)r^{N-2s},$$
exists and $u$ may have an explicit formula as in the local case (see \eqref{bbb}). But this is also wide open. Notice that for $p\not=2$ there is an argument for $(\alpha=0)$ to established the exact behavior of the solutions for the $p$-fractional Laplacian equation and the associated critical exponent (see \cite{Bras}). However we cannot see clearly how the method developed in  \cite{Bras} can be applied to the case $p=2$, and, what it does not seem easy at all, when $\alpha\neq0$.
\\

iii) We emphasize again that for $0<s<1/2$ and $\alpha>2s(N-1)/(1-2s)$ the problem in the Emden-Fowler variables is super-critical so is not clear if a solution with fast decay can exists (see Section 4 bellow).  


\medskip

The rest of the paper is organized as follows: in Section 2 we give we introduce the notion of solutions that will be used along the work and some relationships. Section 3 deals with the proof of Theorem \ref{Liouville}. Finally in Section 4 we obtain the proof of Theorem \ref{existencia}. 

We remark here that along the work we will denote by $C$ a positive constant that may change from line to line.

\section{Weak solution and integral form}

In this section we revise the notion of solution {and we present a key lemma that will be used in the next section to prove the Liouville's result. 
\\
As we have commented in the Introduction of the work, different kind of solutions of \eqref{Henon} can be considered:
\\-{\it Strong} (or {\it pointwise}) solutions $u\in \mathcal{L}_{2s}\cap \mathcal{C}^{1,1}_{loc},$ 
\\-{\it weak} (or {\it variational}) solutions $u\in H^{s}(\mathbb{R}^{N}),$
and
\\-{\it distributional} solutions $u\in \mathcal{L}^{2s}$.
\\As we mention in the introduction, the first objective of the work on hands is to prove a nonexistence result for distributional solutions (see Theorem \ref{Liouville}). Thus, the following auxilliary lemma, which proof will be based on ideas of \cite[section 2]{Zhuo1} (see also \cite{Zhuo}), will be developed for the weaker notion of solutions. However the conclusion it is also true for those solutions that admit maximum and comparison principles.} 

\begin{lemma} \label{weak}
If $u\in \mathcal{L}_{2s} \cap L^\infty_{loc}(\mathbb{R}^{N})$ is a nonnegative distributional solution of \eqref{Henon} then 
\begin{equation}\label{eqint}
u(x)=\int_{\mathbb{R}^{N}}\frac{|y|^{\alpha}u^p(y)}{|x-y|^{N-2s}}\, dy.
\end{equation}
In addition, if $\alpha>0$ then  $u\in C^\infty(\mathbb{R}^{N})$, if $-2s<\alpha<0$ then $u\in C^\infty(\mathbb{R}^{N}\setminus \{0\})$

\end{lemma}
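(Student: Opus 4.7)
The plan is to establish the integral identity \eqref{eqint} by a truncated Green's function argument combined with a Liouville principle, and then to upgrade regularity directly from the resulting Riesz potential representation.

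First, for each $R>0$, let $G_R(x,y)$ denote the Green's function of $(-\Delta)^s$ on the ball $B_R$ with zero exterior datum, and consider the truncated potential
$$v_R(x) := \int_{B_R} G_R(x,y)\,|y|^{\alpha}u^{p}(y)\,dy.$$
By construction $(-\Delta)^s v_R = |x|^{\alpha}u^{p}$ distributionally on $B_R$ while $v_R\equiv 0$ on $\R^N\setminus B_R$, so $w_R := u-v_R$ is $s$-harmonic on $B_R$ and agrees with the nonnegative function $u$ outside. A weak maximum principle for $s$-harmonic distributions (which is where the $\mathcal{L}_{2s}$ and $L^{\infty}_{loc}$ assumptions enter) then forces $w_R\geq 0$ on all of $\mathbb{R}^N$, hence $v_R\leq u$ in $B_R$. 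Since $G_R(x,y)\nearrow c_{N,s}|x-y|^{2s-N}$ as $R\to\infty$, monotone convergence yields
$$v(x) := c_{N,s}\int_{\R^N}\frac{|y|^{\alpha}u^{p}(y)}{|x-y|^{N-2s}}\,dy \;\leq\; u(x) < \infty,$$
and moreover $(-\Delta)^s v = |x|^{\alpha}u^{p}$ distributionally on $\R^N$.

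The difference $h := u - v$ is then a nonnegative distributional $s$-harmonic function belonging to $\mathcal{L}_{2s}$, and the Liouville theorem for such distributions forces $h\equiv c$ for some constant $c\geq 0$. If $c>0$, the lower bound $u\geq c$ combined with the hypothesis $\alpha>-2s$ would give
$$v(x)\geq c_{N,s}\,c^{p}\int_{\R^N}\frac{|y|^{\alpha}}{|x-y|^{N-2s}}\,dy = +\infty,$$
since $|y|^{\alpha-(N-2s)}$ fails to be integrable at infinity precisely when $\alpha\geq -2s$. This contradicts $v\leq u<\infty$, so $c=0$ and the identity \eqref{eqint} follows. For the regularity statement, $u\in L^{\infty}_{loc}$ together with $\alpha>-2s>-N$ implies that $|y|^{\alpha}u^{p}$ is locally integrable with a controlled singularity at the origin; a standard bootstrap on the Riesz potential representation upgrades $u$ first to Hölder regularity and eventually to $C^{\infty}$, smooth on all of $\R^N$ when $\alpha>0$ and smooth away from the origin when $-2s<\alpha<0$ because of the singularity of the weight.

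The step I expect to be the main obstacle is the justification of the nonnegativity $w_R\geq 0$ on $\R^N$: since $u$ is only a distributional solution in $\mathcal{L}_{2s}\cap L^{\infty}_{loc}$, one must invoke a maximum principle or a Poisson-type representation for $s$-harmonic distributions that does not require boundary continuity, and this is precisely the technical ingredient that the authors plan to import from \cite{Zhuo1, Zhuo}.
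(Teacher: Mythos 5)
Your proposal is correct and follows essentially the same route as the paper: the authors likewise set up the truncated Green potential $v_R$ on $B_R$, invoke the monotone convergence of $G_R$ to the Riesz kernel, use a maximum/Liouville principle to identify $u-v$ with a constant, and rule out a positive constant via the divergence of $\int_{\R^N}|y|^{\alpha}|x-y|^{-(N-2s)}\,dy$ for $\alpha>-2s$ — the only difference is that the paper outsources these steps to Theorem 3 of Zhuo et al., whereas you spell them out. The step you flag as the main obstacle (the weak maximum principle for distributional $s$-harmonic functions without boundary continuity) is indeed exactly the ingredient the paper imports from that reference.
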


\begin{remark}\label{alal} 
{i) By the previous result, if $\alpha>0$, locally bounded {\it distributional} solutions of \eqref{Henon} are {\it classical}, that is, {\it strong} (or {\it poinwise}).} Moreover when $\alpha\leq -2s$ there is not a nonnegative locally bounded solution of \eqref{eqint} because the integral of the right is divergent at zero. Notice that the non-existence for  $-2\geq\alpha$ in the case $s=1$ is base on a ODE argument, (see \cite[Theorem 2.3] {DancerDuGuo}). This argument is not valid for the nonlocal case. 
\\
ii) Close related results for the case $-2s<\alpha<0$   can be found in \cite{DouZhou}, including a different regularity results at the singularity. 
\end{remark}

\begin{proof} As we commented before we follow closely the ideas given in \cite{Zhuo1} (see also \cite{Zhuo}) that use the Green's function $G_{R}(x,y)$ for the fractional Laplacian in a ball. Since it is known (see for example \cite{Kul}  and \cite{Chen2}) that $|G_R(x,y)| \leq C|x-y|^{-(N-2s)}$, we notice that the solution of \eqref{Henon} in $B_{R}(0)$ with zero boundary condition given by
$$\int_{B_{R}(0)} G_{R}(x,y)|y|^{\alpha}u^{p}(y)\, dy,\, x\in B_{R}(0), $$
is well define. Therefore using the fact that
$$\int_{\mathbb{R}^{N}}{\frac{|y|^{\alpha}}{|x-y|^{N-2s}}\, dy}=\infty,\quad\alpha>-2s,$$
applying the same argument, base on a maximum principle, that can be found in \cite[Theorem 3]{Zhuo}, we obtain 
$$u(x)=\int_{\mathbb{R}^{N}}\frac{|y|^{\alpha}u^p(y)}{|x-y|^{N-2s}}\, dy.$$

The regularity is nowadays quite standard and it is based on a bootstrapping  argument. See for example a localized version of the regularity results of Schauder type in \cite[Theorem 2.1]{CFQ} or \cite{CS,PhS} .
\end{proof}

\section{Liouville Theorem}

The aim of this section is to prove Theorem \ref{Liouville}, that is the fact that does not exist any positive solution of the nonlocal H\'enon equation when $p$ is subcritical. For that we will follow the ideas developed in \cite{LiZhang,WangLiHong} for the local case taking advantage that the fractional Kelvin transform, that is fundamental to use the method of moving spheres, has been studied thoroughly in the nonlocal framework in  \cite{Bogdan} (see also \cite {Xavi}). First of all, using the fact that $(-\Delta)^{s}|x|^{2s-N}=0$ when $x\neq 0$, it is easy to check that
$$(-\Delta)^s \overline{u}(x)=\frac{1}{|x|^{N+2s}}(-\Delta)^{s}u\left(\frac{x}{|x|^2}\right),$$
with
$$\overline{u}(x):=\frac{1}{|x|^{N-2s}}u\left(\frac{x}{|x|^2}\right),\quad x\in\mathbb{R}^{N}\setminus\{0\}.$$
Then, for $\lambda>0$, using the homogeneity property of the fractional Laplacian we get that
\begin{equation}\label{eq-kelvin}
(-\Delta)^s u_{\lambda}(x)=\left(\frac{\lambda}{|x|}\right)^{N+2s}(-\Delta)^{s}u\left(\frac{\lambda^2x}{|x|^2}\right),
\end{equation}
where
\begin{equation}\label{kelvin}
u_{\lambda}(x):=\frac{1}{\lambda^{N-2s}}\overline{u}\left(\frac{x}{\lambda^2}\right)=\left(\frac{\lambda}{|x|}\right)^{N-2s}u\left(\frac{\lambda^2x}{|x|^2}\right),\quad x\in\mathbb{R}^{N}\setminus\{0\},
\end{equation}
is the Kelvin transform of $u$.  Let us now define
\begin{equation}\label{diferencia}
w_{\lambda}(x):=u_{\lambda}(x)-u(x),\quad x\in\mathbb{R}^{N}\setminus\{0\}.
\end{equation}
Before proving the auxiliar results needed to obtain the Liouville Theorem, for the convenience of the reader, we introduce now a maximum principle for narrow domains for our problem (see \cite{BMS,ChenLiZhang}). That is
\begin{lemma}\label{narrow}
Let $v\in\mathcal{L}_{2s}\cap \mathcal{C}^{1,1}_{loc}$ and $\lambda>0$. If $v(x)=-v_{\lambda}(x)$ for $x\in B_{\lambda}(0)\setminus\{0\}$ and satisfies
$$
\left\{
\begin{aligned}
(-\Delta)^s v-\overline{C}|x|^{\alpha}v&\geq 0 &&  x\in\Omega\subseteq B_{\lambda}\setminus\{0\},\, \overline{C}>0,\\
{v}&\geq0&& \mbox{in } (B_{\lambda}\setminus\{0\})\setminus\Omega.
\end{aligned}\right.
$$
with $\alpha>-2s$ then there exists a small $\delta_0>0$ such that $\inf_{\Omega}v\geq 0$ as long as 
$$\Omega\subseteq \{x\in\mathbb{R}^{N}: \lambda-\delta_0<|x|<\lambda\}.$$
\end{lemma}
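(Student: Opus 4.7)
The plan is to argue by contradiction, assuming $m := \inf_\Omega v < 0$. The Kelvin antisymmetry $v = -v_\lambda$ forces $v = 0$ on $\partial B_\lambda$ (points on the sphere $\{|x|=\lambda\}$ are fixed by the inversion and the weight $(\lambda/|x|)^{N-2s}$ equals $1$ there), while $v \geq 0$ on $(B_\lambda \setminus \{0\}) \setminus \Omega$ by hypothesis; combined with $v \in \mathcal{C}^{1,1}_{loc}$, the negative infimum is attained at some interior point $x_0 \in \Omega$.

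The core of the proof is to rewrite $(-\Delta)^s v(x_0)$ in a form that exploits the antisymmetry. Splitting the defining principal-value integral over $B_\lambda$ and $\R^N \setminus B_\lambda$, performing the change of variables $y = \lambda^2 z/|z|^2$ in the outer integral (Jacobian $(\lambda/|z|)^{2N}$), substituting $v(y) = -(|z|/\lambda)^{N-2s} v(z)$, and using the Kelvin distance identity $|x_0 - y| = (\lambda/|z|)\,M(x_0, z)$ with
\begin{equation*}
M(x_0, z)^2 = |x_0 - z|^2 + \frac{(\lambda^2 - |x_0|^2)(\lambda^2 - |z|^2)}{\lambda^2} \geq |x_0 - z|^2,
\end{equation*}
produces, after rearranging, the representation
\begin{equation*}
(-\Delta)^s v(x_0) = \mathrm{P.V.}\!\!\int_{B_\lambda}\!(v(x_0) - v(z))\!\left[\frac{1}{|x_0-z|^{N+2s}} - \frac{1}{M^{N+2s}}\right]\!dz + v(x_0)\,I(x_0),
\end{equation*}
with $I(x_0) := \int_{B_\lambda} \frac{1 + (\lambda/|z|)^{N-2s}}{M^{N+2s}}\,dz > 0$ (the constant $a_{N,s}$ has been absorbed). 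Since $v(x_0) \leq v(z)$ on $\Omega$ and $v(x_0) < 0 \leq v(z)$ on $(B_\lambda \setminus \{0\}) \setminus \Omega$, the factor $v(x_0) - v(z)$ is non-positive throughout $B_\lambda$, while $M \geq |x_0 - z|$ makes the bracket non-negative. Hence the first integral is non-positive and $(-\Delta)^s v(x_0) \leq v(x_0)\, I(x_0)$.

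Combining with the assumed pointwise inequality $(-\Delta)^s v(x_0) \geq \overline{C}|x_0|^\alpha v(x_0)$ and dividing by $v(x_0) < 0$ yields $I(x_0) \leq \overline{C}|x_0|^\alpha$. Since $|x_0|^\alpha$ is bounded on the annulus $\{\lambda - \delta_0 < |x| < \lambda\}$ by a constant depending only on $\lambda$ and $\alpha$ (using $\alpha > -2s$ and fixing $\delta_0 < \lambda/2$), the contradiction follows once $I(x_0)$ is shown to blow up as $|x_0| \to \lambda$. This last estimate is the heart of the argument and, in my view, the main obstacle: for $|x_0| = \lambda - \delta$ and $z \in B_\delta(x_0) \cap B_\lambda$, both $\lambda^2 - |x_0|^2$ and $\lambda^2 - |z|^2$ are $\lesssim \delta$, hence $M^2 \lesssim \delta^2$, so $1/M^{N+2s} \gtrsim \delta^{-(N+2s)}$ on a set of volume $\gtrsim \delta^N$, giving $I(x_0) \gtrsim \delta^{-2s}$. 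Choosing $\delta_0$ small enough that $\delta^{-2s}$ uniformly exceeds $\overline{C}|x|^\alpha$ throughout the annulus yields the desired contradiction.
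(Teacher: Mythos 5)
Your argument is correct and follows essentially the same route as the paper: the paper's proof cites \cite[Theorem 2.2]{ChenLiZhang} for the key inequality $(-\Delta)^s \widetilde{v}(x_{\min})\leq v(x_{\min})F(\lambda,x_{\min})$ with $F\geq C\delta^{-2s}$, and your representation formula with $I(x_0)$ is precisely that inequality written out in full, with the same splitting over $B_\lambda$ and its complement, the Kelvin change of variables, and the distance identity for $M$. The only cosmetic difference is that the paper works with $\widetilde{v}=v-v(x_{\min})$, which satisfies $(-\Delta)^s\widetilde{v}=(-\Delta)^s v$ and $\widetilde{v}(x_{\min})=0$, while you work directly with $v$; this changes nothing.
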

\begin{proof}
Suppose by contradiction that $\inf_{\Omega}v =v(x_{min})<0$. Following verbatim \cite[Theorem 2.2]{ChenLiZhang} we get that
$$(-\Delta)^s \widetilde{v}(x_{min})\leq v(x_{min})F(\lambda, x_{min}),$$
where $\widetilde{v}(x):=v(x)-v(x_{min})$ and $F(\lambda, x_{min})\geq C \delta^{-2s}$. Since on the other hand 
$$(-\Delta)^s\widetilde{v}(x_{min}) -\overline{C}|x_{min}|^{\alpha}\widetilde{v}(x_{min})=(-\Delta)^s {v}(x_{min}) \geq \overline{C}|x_{min}|^{\alpha}v(x_{min}),$$
the contradiction follows choosing $\delta$ small enough using the fact that $\alpha>-2s$ and $v(x_{min})<0$.
\end{proof}
By the previous lemma we have the next
\begin{prop}
Let $u\in \mathcal{L}_{2s}\cap \mathcal{C}^{1,1}_{loc}(\mathbb{R}^{N}\setminus \{0\})$ be a nonnegative solution of \eqref{Henon} with $1<p<p^*_{\alpha,s}$. There exists $\lambda_0>0$ such that 

\begin{equation}\label{lucia}
\mbox{$w_{\lambda}(x)\geq 0$ for every $x\in B_{\lambda}\setminus \{0\}$, $\lambda<\lambda_0$,}
\end{equation}
where $w_{\lambda}$ and $p_{\alpha,s}$ were given in \eqref{diferencia} and \eqref{critico} respectively. 
\end{prop}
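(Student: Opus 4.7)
The plan is to carry out the standard ``start'' of the moving spheres method: for $\lambda$ small I will confine any potential negativity of $w_\lambda$ to a thin annulus near $\partial B_\lambda$, and then rule it out there using the narrow domain principle (Lemma \ref{narrow}).

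First I would compute, using \eqref{eq-kelvin}, the equation \eqref{Henon} and the identity $u(x^{*})=(|x|/\lambda)^{N-2s}u_{\lambda}(x)$ with $x^{*}=\lambda^{2}x/|x|^{2}$, the expression
$$(-\Delta)^{s}u_{\lambda}(x)=\bigl(\lambda/|x|\bigr)^{\tau}|x|^{\alpha}u_{\lambda}^{p}(x),\qquad \tau:=N+2s+2\alpha-p(N-2s),$$
where $\tau>0$ is precisely the subcriticality $p<p^{*}_{\alpha,s}$. Since $|x|<\lambda$ in $B_{\lambda}$, $(\lambda/|x|)^{\tau}\ge 1$. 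Subtracting $(-\Delta)^{s}u=|x|^{\alpha}u^{p}$ and applying the mean value theorem (with $p>1$ and $w_\lambda<0$, which forces the intermediate value $\xi$ to satisfy $\xi\le u$) yields, on $\Omega^{-}_{\lambda}:=\{x\in B_{\lambda}\setminus\{0\}:w_{\lambda}(x)<0\}$,
$$(-\Delta)^{s}w_{\lambda}(x)-p\,u^{p-1}(x)\,|x|^{\alpha}w_{\lambda}(x)\ge 0.$$
A direct calculation from $(u_{\lambda})_{\lambda}=u$ gives $(w_{\lambda})_{\lambda}=-w_{\lambda}$, so $w_{\lambda}$ is admissible in Lemma \ref{narrow}.

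Next I would derive a pointwise lower bound for $u_{\lambda}$. Restricting the integral formula \eqref{eqint} to a fixed annulus $A=\{1/2\le|y|\le 1\}$, on which $u$ is continuous and strictly positive, one obtains
$$u(z)\ge\frac{c_{0}}{(|z|+1)^{N-2s}},\qquad c_{0}:=\int_{A}|y|^{\alpha}u^{p}(y)\,dy>0,\quad z\in\R^{N}.$$
Applied to $u_{\lambda}(x)=(\lambda/|x|)^{N-2s}u(x^{*})$ with $|x^{*}|=\lambda^{2}/|x|$, this becomes
$$u_{\lambda}(x)\ge\frac{c_{0}\,\lambda^{N-2s}}{(\lambda^{2}+|x|)^{N-2s}},\qquad x\in B_{\lambda}\setminus\{0\}.$$
In particular, for $|x|\le \lambda^{2}$ one gets $u_{\lambda}(x)\ge c_{0}\,2^{-(N-2s)}\lambda^{-(N-2s)}\to\infty$ as $\lambda\to 0^{+}$, so for $\lambda$ sufficiently small this exceeds $M:=\sup_{B_{1}}u$, which is finite by local boundedness (when $-2s<\alpha<0$ one uses instead the sup over the same shell that will eventually contain $\Omega^-_\lambda$). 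Consequently $\Omega^{-}_{\lambda}\subset\{\lambda^{2}<|x|<\lambda\}$.

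Finally, choose $\lambda_{0}$ smaller than the threshold $\delta_{0}$ produced by Lemma \ref{narrow} for the constant $\overline C:=p\,M^{p-1}$. Then for $\lambda<\lambda_{0}$ the shell $\{\lambda^{2}<|x|<\lambda\}$ is contained in $\{\lambda-\delta_{0}<|x|<\lambda\}$, and Lemma \ref{narrow} applied to $w_{\lambda}$ on $\Omega=\Omega^{-}_{\lambda}$ forces $\inf_{\Omega^{-}_{\lambda}}w_{\lambda}\ge 0$, contradicting the definition of $\Omega^{-}_{\lambda}$ unless it is empty, which is \eqref{lucia}. The delicate point is the interplay between the two ends of $B_{\lambda}$: the Kelvin factor $(\lambda/|x|)^{N-2s}$ amplifies $u_{\lambda}$ near the origin, while the vanishing of $w_{\lambda}$ on $\partial B_{\lambda}$ only allows a narrow boundary layer of uncertainty. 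The bound extracted from \eqref{eqint} bridges these two regimes quantitatively, squeezing $\Omega^{-}_{\lambda}$ into a shell thin enough for Lemma \ref{narrow} to close the argument.
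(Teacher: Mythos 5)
Your argument is correct and takes essentially the same route as the paper: the lower bound $u(z)\gtrsim (1+|z|)^{-(N-2s)}$ extracted from the integral representation \eqref{eqint} makes $u_\lambda$ blow up like $\lambda^{-(N-2s)}$ on an inner ball of radius $\sim\lambda^2$, confining the negative set of $w_\lambda$ to a thin shell where the subcritical inequality \eqref{calima} and the narrow-domain principle of Lemma \ref{narrow} apply. The only cosmetic differences are that the paper works with the inner ball $B_{\lambda^{2}/2}$ and invokes Lemma \ref{narrow} directly on the annulus $B_{\lambda}\setminus B_{\lambda^{2}/2}$ rather than on the negative set.
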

\begin{proof}  To prove \eqref{lucia} let $\lambda>0$ be a small parameter fixed but arbitrary. Since by Lemma \ref{weak}
$$u(x)=\int_{\mathbb{R}^{N}}\frac{|y|^{\alpha}u^p(y)}{|x-y|^{N-2s}}\, dy,$$
for $|x|\geq 2$ we get that
$$u(x)\geq \int_{B_1\setminus B_{1/2}}\frac{|y|^{\alpha}u^p(y)}{|x-y|^{N-2s}}\, dy\geq  \int_{B_1\setminus B_{1/2}}\frac{C}{|x-y|^{N-2s}}\, dy=\frac{C_0}{|x|^{N-2s}}.$$
Since the previous inequality implies that
$$u_{\lambda}(x)\geq \frac{C_0}{\lambda^{N-2s}},\quad x\in B_{\frac{\lambda^2}{2}}\setminus\{0\},$$ 
then 
\begin{equation}\label{dentro}
w_{\lambda}(x)>0,\quad x\in B_{\frac{\lambda^2}{2}}\setminus\{0\},
\end{equation}
choosing $\lambda<\lambda_1$ for some $\lambda_1$ small enough. On the other hand by \eqref{eq-kelvin} and the fact that $p$ is subcritical, we have that
\begin{eqnarray}
(-\Delta)^{s}w_{\lambda}(x)&=&|x|^{\alpha}\left(\left(\frac{\lambda}{|x|}\right)^{N+2s+2\alpha-p(N-2s)}u^p_{\lambda}(x)-u^p(x)\right)\label{calima}\\
&\geq& p\, |x|^{\alpha}\varphi(x) w_{\lambda}(x),\quad x\in B_{\lambda}\setminus\{0\},\nonumber
\end{eqnarray}
where $u_{\lambda}^{p-1}\leq\varphi\leq u^{p-1}$. Let us take $0<\lambda<\lambda_2$ that guarantee that $\lambda<\frac{\lambda^2}{2}+\delta_0$ where $\delta_0$ is given in Theorem \ref{narrow}. Since $0\leq\varphi(x)\leq C$ when $x\in B_{\lambda}$ then the Theorem \ref{narrow} can be applied with $\Omega=B_{\lambda}\setminus B_{\frac{\lambda^2}{2}}$ obtaining that
\begin{equation}\label{fuera}
w_{\lambda}(x)\geq0,\quad x\in B_{\lambda}\setminus B_{\frac{\lambda^2}{2}}.
\end{equation}
Thus, the desired conclusion follows from \eqref{dentro} and \eqref{fuera} for $\lambda_0=\min \{\lambda_1, \lambda_2\}$.
\end{proof}

By the previous proposition 
\begin{equation}\label{sup}
0<\widetilde{\lambda}:=\sup\{\mu>0:\, w_\lambda{\geq 0}\mbox{ in $B_\lambda\setminus \{0\}$ with $0<\lambda<\mu$}\},
\end{equation}
is well define and $\widetilde{\lambda}\leq\infty$. We analyze now if $\widetilde{\lambda}$ can be equal to infinite.
\begin{prop}
Let $u\in \mathcal{L}_{2s}\cap \mathcal{C}^{1,1}_{loc}(\mathbb{R}^{N}\setminus\{0\})$ be a nonnegative solution of \eqref{Henon} with $1<p<p^*_{\alpha}$. If $\widetilde{\lambda}<\infty$ then $w_{\widetilde{\lambda}}\equiv 0$
in $B_{\widetilde{\lambda}}\setminus \{0\}.$
\end{prop}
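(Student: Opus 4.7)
The plan is to argue by contradiction: assuming $w_{\widetilde\lambda}\not\equiv 0$, show that $w_\lambda\geq 0$ persists for some $\lambda=\widetilde\lambda+\epsilon$, contradicting the definition of $\widetilde\lambda$ in \eqref{sup}. Continuity of $\lambda\mapsto u_\lambda(x)$ at each fixed $x\neq 0$ together with the supremum definition already yields $w_{\widetilde\lambda}\geq 0$ in $B_{\widetilde\lambda}\setminus\{0\}$, so the only thing to rule out is that this inequality is strict somewhere.

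First I would promote nonnegativity to strict positivity via a strong maximum principle. From \eqref{calima},
$$(-\Delta)^s w_{\widetilde\lambda}(x)=|x|^{\alpha}\left[\left(\frac{\widetilde\lambda}{|x|}\right)^{N+2s+2\alpha-p(N-2s)}u_{\widetilde\lambda}^p(x)-u^p(x)\right],$$
and since $p<p^*_{\alpha,s}$ the exponent $N+2s+2\alpha-p(N-2s)$ is strictly positive, so the bracketed factor exceeds $1$ on $B_{\widetilde\lambda}\setminus\{0\}$. Combined with $u_{\widetilde\lambda}\geq u>0$ (positivity of $u$ being immediate from the integral representation \eqref{eqint}) this makes the right-hand side strictly positive. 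Coupling $(-\Delta)^s w_{\widetilde\lambda}>0$ with the Kelvin antisymmetry $(w_{\widetilde\lambda})_{\widetilde\lambda}=-w_{\widetilde\lambda}$ and a standard strong maximum principle for Kelvin-antisymmetric functions in the nonlocal setting (in the spirit of Lemma \ref{narrow}) forces $w_{\widetilde\lambda}>0$ strictly in $B_{\widetilde\lambda}\setminus\{0\}$.

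Next I would extend $w_\lambda\geq 0$ just past $\widetilde\lambda$. For small parameters $r,\delta,\epsilon>0$, decompose the punctured ball with $\lambda=\widetilde\lambda+\epsilon$ as
$$B_\lambda\setminus\{0\}=\bigl(B_r\setminus\{0\}\bigr)\cup\bigl(\overline{B_{\widetilde\lambda-\delta}}\setminus B_r\bigr)\cup\bigl(B_\lambda\setminus B_{\widetilde\lambda-\delta}\bigr).$$
On the compact middle annulus, the strict pointwise positivity from the previous step combined with continuity of $\lambda\mapsto w_\lambda$ produces $w_\lambda\geq c>0$ for $\epsilon$ small. The outer shell is a narrow region, so Lemma \ref{narrow} applies to $v=w_\lambda$ (antisymmetric by construction, satisfying $(-\Delta)^s w_\lambda\geq p|x|^{\alpha}\varphi(x)w_\lambda$ with $\varphi$ bounded by \eqref{calima}, and nonnegative on the relevant portion of its complement by the middle-annulus step), provided the shell width $\epsilon+\delta$ is chosen below the threshold $\delta_0$ of Lemma \ref{narrow}. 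Finally for the inner ball $B_r\setminus\{0\}$ I would reuse the computation from the previous proposition: \eqref{eqint} together with the pointwise lower bound $u(y)\geq C_0|y|^{2s-N}$ for $|y|\geq 2$ gives $u_\lambda(x)\geq C_0\lambda^{-(N-2s)}$ on $B_{\lambda^2/2}$, and matching this against the local boundedness of $u$ together with the strict positivity of $w_{\widetilde\lambda}$ on the sphere $|x|=r$ yields $w_\lambda\geq 0$ on $B_r\setminus\{0\}$ once $r$ and $\epsilon$ are tuned appropriately.

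Putting the three pieces together gives $w_{\widetilde\lambda+\epsilon}\geq 0$ in $B_{\widetilde\lambda+\epsilon}\setminus\{0\}$, contradicting \eqref{sup}, and hence $w_{\widetilde\lambda}\equiv 0$. I expect the most delicate point to be the inner region $B_r\setminus\{0\}$: unlike the middle annulus it is not relatively compact, so strict positivity of $w_{\widetilde\lambda}$ does not propagate there by continuity alone, and one is forced to exploit the explicit Kelvin scaling at $0$ together with the tail behavior of $u$ at infinity. In the regime $-2s<\alpha<0$, where $u$ need not be smooth at the origin, the coordinated choice of $r$, $\delta$ and $\epsilon$ is the genuine technical difficulty.
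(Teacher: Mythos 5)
Your overall strategy coincides with the paper's: continuity in $\lambda$ gives $w_{\widetilde{\lambda}}\geq 0$, the strong maximum principle for Kelvin--antisymmetric functions gives the dichotomy $w_{\widetilde{\lambda}}>0$ or $w_{\widetilde{\lambda}}\equiv 0$, and, assuming the first alternative, one propagates nonnegativity to $\lambda$ slightly above $\widetilde{\lambda}$ by combining a uniform positive lower bound on an inner region (stable under small changes of $\lambda$) with the narrow-domain maximum principle of Lemma \ref{narrow} on the outer shell $B_{\lambda}\setminus B_{\widetilde{\lambda}-\delta}$, contradicting \eqref{sup}. The paper uses only two regions, setting $m:=\min_{B_{\widetilde{\lambda}-\delta}\setminus\{0\}}w_{\widetilde{\lambda}}$ and asserting both $m>0$ and the persistence of $w_{\lambda}>m/2$ there for $\lambda$ slightly larger; you are right to flag that the punctured ball is not compact and that near the origin $w_{\lambda}$ is controlled by the behaviour of $u$ at infinity, so neither assertion follows from continuity alone. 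Your three-region decomposition isolates exactly the point the paper glosses over.

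That said, your proposed mechanism for the inner ball $B_r\setminus\{0\}$ does not close the gap. The bound $u_{\lambda}(x)\geq C_0\lambda^{-(N-2s)}$ on $B_{\lambda^2/2}\setminus\{0\}$ defeats the local bound on $u$ only when $\lambda$ is small --- which is precisely why it works in the starting proposition --- whereas for $\lambda$ near a general finite $\widetilde{\lambda}$ the constant $C_0\widetilde{\lambda}^{-(N-2s)}$ need not exceed $\sup_{B_r}u$. Strict positivity of $w_{\widetilde{\lambda}}$ on the sphere $\{|x|=r\}$ does not help either, since no maximum principle is being invoked on $B_r\setminus\{0\}$. What is actually needed is control of $\liminf_{x\to 0}w_{\lambda}(x)=\lambda^{-(N-2s)}\liminf_{|y|\to\infty}|y|^{N-2s}u(y)-u(0)$, uniformly for $\lambda$ in a right neighbourhood of $\widetilde{\lambda}$; in the moving-spheres literature this is extracted from the integral representation \eqref{eqint}, writing $w_{\widetilde{\lambda}}$ as an integral over $B_{\widetilde{\lambda}}$ against a positive kernel difference and using $w_{\widetilde{\lambda}}>0$ on a set of positive measure to conclude that the liminf at $\widetilde{\lambda}$ is strictly positive. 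Since the paper elides this point entirely, your write-up is no less rigorous than the printed proof, but as stated the inner-region step would fail and needs to be replaced by the integral-representation argument.
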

\begin{proof} Here we follows the classical moving type argument (see \cite{ChenLiZhang}).
First of all we notice that, by continuity, $w_{\widetilde{\lambda}}\geq 0$ in $B_{\widetilde{\lambda}}\setminus \{0\}$.
Thus, by the strong maximum principle, $w_{\widetilde{\lambda}}>0$  or  $w_{\widetilde{\lambda}}\equiv 0$
in $B_{\widetilde{\lambda}}\setminus \{0\}$. We assume by contradiction that the first possibility holds.

Let $0<\delta<\delta_0$ be a small parameter such that 
$$0<m:=\min_{B_{\widetilde{\lambda}-\delta}\setminus \{0\}} w_{\widetilde{\lambda}},$$
where $\delta_0$ was given in Lemma \ref{narrow}. Then, using the continuity of $w_\lambda$ with respect to $\lambda$, we get that 
$$\mbox{$0<m/2<w_\lambda$ in $B_{\widetilde{\lambda}-\delta}\setminus\{0\}$ if {$\lambda\in (\widetilde{\lambda}, \widetilde{\lambda}+(\delta_0-\delta))$}}.$$ 
Using now the Lemma \ref{narrow} with $\Omega=B_{\lambda}\setminus B_{\widetilde{\lambda}-\delta}$ to get $w_\lambda\geq 0$ in $B_{\lambda}\setminus\{0\}$ for $\lambda\in (\widetilde{\lambda}, \widetilde{\lambda}+(\delta_0-\delta))$ that contradicts the definition of $\widetilde{\lambda}>0$.
\end{proof}

The previous proposition implies that $\widetilde{\lambda}$ has to be infinite because, using \eqref{calima}, we know that $u_{\widetilde{\lambda}}(x)= u(x)$ can not be possible if $p<p^{*}_{\alpha,s}$. Thus our last step will be to reject the possibility that  $\widetilde{\lambda}=\infty$ as the next result shows. Observe that once we rule this option the nonexistence of a positive solution of \eqref{Henon} and the proof of Theorem \ref{Liouville} will be done.
\begin{prop}
Let $\widetilde{\lambda}$ be defined in \eqref{sup}. Then $\widetilde{\lambda}\neq\infty$.
\end{prop}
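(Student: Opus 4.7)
The strategy is by contradiction. Assume $\widetilde{\lambda}=\infty$, so that $u_\lambda(x)\geq u(x)$ holds on $B_\lambda\setminus\{0\}$ for every $\lambda>0$. The first step is to extract a pointwise decay estimate for $u$ at infinity. Substituting $z=\lambda^2 x/|x|^2$ in the Kelvin inequality (so that $|x||z|=\lambda^2$ and $z/|z|=x/|x|$) and letting $\lambda=\sqrt{|x||z|}$ range over arbitrary pairs of points on a common ray, one obtains that the map $r\mapsto u(re)\,r^{(N-2s)/2}$ is nondecreasing on $(0,\infty)$ for every $e\in S^{N-1}$. Since $u>0$ (from the integral representation of Lemma~\ref{weak}) and continuous on the compact sphere $\partial B_{r_0}$ for any fixed $r_0>0$, this yields a uniform lower bound
$$
u(x)\geq c_0|x|^{-(N-2s)/2},\qquad |x|\geq r_0,
$$
with $c_0>0$ independent of the direction $x/|x|$.

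The second step is a bootstrap via the integral representation of Lemma~\ref{weak}. Given $u(y)\geq c|y|^{-\beta}$ for $|y|\geq R$ with $\beta<(\alpha+N)/p$, restricting the integral to the annulus $R\leq|y|\leq|x|$ and using $|x-y|\leq 2|x|$ produces the improved lower bound $u(x)\geq c'|x|^{-\beta'}$ for $|x|$ large, with
$$
\beta'=p\beta-(\alpha+2s).
$$
Starting from $\beta_0=(N-2s)/2$ and iterating, the affine recursion $\beta_{n+1}=p\beta_n-(\alpha+2s)$ has fixed point $\beta^*=(\alpha+2s)/(p-1)$ and satisfies $\beta_{n+1}-\beta^*=p(\beta_n-\beta^*)$. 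A direct algebraic manipulation shows that the condition $\beta_0<\beta^*$ is \emph{equivalent} to the subcriticality $p<p^*_{\alpha,s}$; since $p>1$, the iterates $\beta_n$ then strictly decrease and diverge to $-\infty$. The admissibility condition $\beta_n<(\alpha+N)/p$ needed at each step is preserved by monotonicity and holds initially because the inequality $p^*_{\alpha,s}<2(\alpha+N)/(N-2s)$ is equivalent to $N>2s$.

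Once $\beta_n$ becomes sufficiently negative so that $|\beta_n|\geq 2s$, which happens after finitely many iterations, the resulting bound $u(x)\geq c|x|^{|\beta_n|}$ at infinity is incompatible with $u\in\mathcal{L}_{2s}$, yielding the desired contradiction. The main obstacle is the careful orchestration of the bootstrap: at every iteration one must check that the exponent is genuinely improved and that the annulus integral $\int_{R\leq|y|\leq|x|}|y|^{\alpha-p\beta_n}\,dy$ is driven by its upper endpoint. Both points ultimately rest on the strict subcriticality $p<p^*_{\alpha,s}$ together with the standing assumptions $\alpha>-2s$ (so that $\beta^*>0$) and $N>2s$ (so that $\beta_0<(\alpha+N)/p$).
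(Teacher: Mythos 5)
Your proof is correct but takes a genuinely different route from the paper's. Both arguments open identically: from $\widetilde{\lambda}=\infty$ one extracts the monotonicity of $r\mapsto u(re)\,r^{(N-2s)/2}$ along rays and hence the lower bound $u(x)\geq c_0|x|^{-(N-2s)/2}$ for $|x|$ large. They diverge afterward. The paper rescales $v(z):=u(x+|x|z)$ into the unit ball, notes that $(-\Delta)^s v=|x|^{2s}c(x+|x|z)v$ in $B_1$ with $v\geq 0$ in $\mathbb{R}^N$ where $c(x)=|x|^\alpha u^{p-1}(x)$, and invokes the principal-eigenvalue comparison of Quaas--Salort--Xia (\cite{Ariel}) to conclude $\sup_{|z|\leq 1}|x|^{2s}c(x+|x|z)\leq\lambda_1(B_1)$; sending $|x|\to\infty$ contradicts $\lim_{|x|\to\infty}|x|^{\alpha+2s}u^{p-1}(x)=\infty$, which is exactly where strict subcriticality enters. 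You instead bootstrap the integral representation of Lemma~\ref{weak}: the recursion $\beta_{n+1}=p\beta_n-(\alpha+2s)$ has fixed point $\beta^*=(\alpha+2s)/(p-1)$, the initial condition $\beta_0=(N-2s)/2<\beta^*$ is equivalent to $p<p^*_{\alpha,s}$, and the iterates run to $-\infty$, eventually violating $u\in\mathcal{L}_{2s}$. Your argument is more self-contained and elementary --- it needs nothing beyond Lemma~\ref{weak} and avoids the nonlocal principal-eigenvalue machinery --- at the cost of some bookkeeping on the annular ranges $|y|\geq R_n$ (which grow but remain finite, so this is harmless). The paper's version is terser once \cite{Ariel} is available. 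Both implement the same underlying idea, that the subcritical decay rate $|x|^{-(N-2s)/2}$ cannot be sustained by a positive solution of the integral equation; I see no gap in yours.
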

\begin{proof}
Following the ideas done in \cite{Lin} for the local case let us suppose by contradiction that $\widetilde{\lambda}=\infty$. It is clear that \eqref{kelvin} and \eqref{sup} imply 
$$\mbox{$u_{\lambda}(x)\leq u(x)$ if $|x|\geq\lambda$ with $0<\lambda<\tilde{\lambda}$.}$$
Then, for every $|x|\geq 1$, we can consider $\lambda:=|x|^{1/2}$ obtaining that
$$u(x)\geq u_{|x|^{1/2}}(x)\geq \frac{1}{|x|^{\frac{N-2s}{2}}}\min_{|z|\leq1} u(z):= \frac{c_0}{|x|^{\frac{N-2s}{2}}}.$$
Since $1<p<p^*_{\alpha}$ this clearly implies that
\begin{equation}\label{infty}
\lim_{|x|\to \infty} |x|^{\alpha+2s} u^{p-1}(x)=\infty.
\end{equation}
Thus the function $v(z):=u(x+|x|z)$,  $x\in\mathbb{R}^{N}$, satisfies
$$
\left\{
\begin{aligned}
(-\Delta)^s v(z)= |x|^{2s}c(x+|x|z)v(z),&&|z|\leq 1.\\
v\geq 0,&&\mathbb{R}^{N},
\end{aligned}\right.
$$
where $c(x):= |x|^{\alpha} u^{p-1}(x)$. Therefore by \cite[Theorema 1]{Ariel} we get that
$$\lambda_1(B_1)>\sup_{|z|\leq 1}|x|^{2s}c(x+|x|z),\, x\in\mathbb{R}^{N},$$
where $\lambda_{1}(B_{1})$ the first eigenvalue of the fractional Laplacian. The previous inequality clearly implies a contradiction with \eqref{infty} because $|x|$ can be taken arbitrarily big. Thus the desired conclusion follows.
\end{proof}

\section{Critical exponent and existence of the bubble}
The aim of this section will be to prove the existence result announced in Theorem \ref{existencia}, that is, establish the existence of radially symmetric solutions of the nonlocal critical H\'enon equation and the fast decay of them. 

Before introducing an aditional functional setting that will be use along this section to get the previous two objective, we summarize briefly how the form of the, commonly called, ``bubble" can be obtain when $\alpha=0$. As we have commented in the introduction of the present work the strategy followed in the case $\alpha\neq 0$ and $s=1$ can not be applied in the nonlocal framework because the expression of the fractional Laplacian for radial functions it can not be simplified as occurs with the Laplacian. However when $s\neq 1$ but $\alpha=0$ we can still do an explicit computation to get the form of the solution (see \cite{Lieb}). The key point when $\alpha=0$ is that the Fourier transform of the radial function 
$$u_{\mu}(x)=(1+|x|^2)^{-\mu},\, \mu>0,$$
is well known, and is given by 
$$\mathcal{F}{u_{\mu}}(\xi)=|\xi|^{\mu-\frac{N}{2}}K_{\mu-\frac{N}{2}}(|\xi|),$$
where $K$ is a Bessel function that satisfies $K_{a}=(-1)^{a}K_{-a}$, $a\in\mathbb{R}$ (see \cite{Lieb}). Using this property of $K$ it follows that
$$|\xi|^{-2s}\mathcal{F}{u_{\frac{N-2s}{2}}}(\xi)=|\xi|^s K_{-s}(|\xi|)= C|\xi|^{s}K_s(|\xi|)= C \mathcal{F}{u_{\frac{N+2s}{2}}}(\xi),$$
that is,
$$|x|^{-(N+2s)}\ast u_{\frac{N-2s}{2}}(x) = C u_{\frac{N+2s}{2}}(x)= C \left(u_{\frac{N-2s}{2}}\right)^{\frac{N+2s}{N-2s}}(x).$$
This gives that all radially symmetric solutions when $\alpha=0$ must be the standard bubbles in the nonlocal case given by
$$b(x)=C(N,s)\left(\frac{\lambda}{{\lambda^2}+|x|^{2}}\right)^{\frac{N-2s}{2}},$$
for some $C(N,s),\, \lambda>0.$ The computations done just before show up that the strategy done when $\alpha=0$ and $s\neq 1$ neither can be adapted to our case because the Fourier transform of 
$$u_{\mu}(x)=(1+|x|^\gamma)^{-\mu},\, \mu>0,\, 0<\gamma\neq 2,$$
has not the desired expression as occurs when $\gamma=2$. Therefore to find the shape of the bubble when $0<s<1$, $\alpha\neq 0$, we have to use a complete different strategy as we will expose then. 
\subsection{An alternative problem}
Following the ideas done in \cite{Manuel} we will apply the Emden Fowler change of variables in \eqref{Henon} when $p=p^*_{\alpha,s}$, $\alpha>-2s$. That is we look for radial solutions of the form
\begin{equation}\label{dobleuve}
w(r)=r^{-\frac{N-2s}{2}} v(r),
\end{equation}
where $v$ is some radial function to be determinate. Denoting by 
\begin{equation}\label{beta}
\beta:=-\frac{N-2s}{2},
\end{equation}
 It is clear that
\begin{eqnarray*}
(-\Delta)^{s} w(r)&=&\int_{0}^{\infty}\int_{\mathbb{S}^{N-1}}\frac{r^{\beta}v(r)-\rho^{\beta}v(\rho)}{|r^2+\rho^2-2r\rho\langle\theta, \sigma\rangle|^{\frac{N+2s}{2}}}\, \rho^{N-1}\, d\sigma d\rho\\
&=&r^{-2s+\beta}\int_{0}^{\infty}\int_{\mathbb{S}^{N-1}}\widetilde{\rho}^{\, N-1}\frac{v(r)-\widetilde{\rho}^{\, \beta}v(r\widetilde{\rho})}{|1+\widetilde{\rho}^{\, 2}-2\widetilde{\rho}\langle\theta, \sigma\rangle|^{\frac{N+2s}{2}}}\, d\sigma d\widetilde{\rho},
\end{eqnarray*}
where $\widetilde{\rho}=\rho/r$. Then
\begin{equation}\label{eq-dobleuve}
(-\Delta)^{s} w(r)=r^{-2s+\beta}(\mathcal{L} v(r) + \mathcal{A}_{s,N} v(r)),\quad r>0,
\end{equation}
with
\begin{equation}\label{ele}
\mathcal{L} v(r):= \int_{0}^{\infty}\int_{\mathbb{S}^{N-1}}\widetilde{\rho}^{\, N-1+\beta}\frac{v(r)-v(r\widetilde{\rho})}{|1+\widetilde{\rho}^{\, 2}-2\widetilde{\rho}\langle\theta, \sigma\rangle|^{\frac{N+2s}{2}}}\, d\sigma d\widetilde{\rho},
\end{equation}
and
\begin{equation}\label{a}
\mathcal{A}_{s,N}:=\int_{0}^{\infty}\int_{\mathbb{S}^{N-1}}\widetilde{\rho}^{\, N-1}\frac{1-\widetilde{\rho}^{\, \beta}}{|1+\widetilde{\rho}^{\, 2}-2\widetilde{\rho}\langle\theta, \sigma\rangle|^{\frac{N+2s}{2}}}\, d\sigma d\widetilde{\rho}.
\end{equation}
Thus from \eqref{dobleuve}, \eqref{eq-dobleuve} and the fact that $\alpha+\beta p^*_{\alpha,s}=-2s+\beta,$ we conclude that
\begin{equation}\label{eq-uve}
\mathcal{L} v(r) + \mathcal{A}_{s,N} v(r)= v^{p^*_{\alpha,s}}(r),\, \quad r>0.
\end{equation}
Finally doing the Emden Fowler change of variable $r=e^{\kappa}$, $\widetilde{\rho}=e^{\tau-\kappa}$ the function $\bar{v}(\kappa)=v(e^{\kappa})$, $\kappa\in\mathbb{R}$, satisfies
$$\mathcal{T} \bar{v}(\kappa) + \mathcal{A}_{s,N} \bar{v}(\kappa)= \bar{v}^{p^*_{\alpha,s}}(\kappa),\, \quad \kappa\in\mathbb{R},$$
where
\begin{eqnarray}
\mathcal{T}\bar{v}(\kappa)&=&\int_{\mathbb{R}}\int_{\mathbb{S}^{N-1}}e^{(\tau-\kappa)(N+\beta)}\frac{v(e^{\kappa})-v(e^{\tau})}{|1+e^{-2(\kappa-\tau)}-2e^{-(\kappa-\tau)}\langle\theta, \sigma\rangle|^{\frac{N+2s}{2}}}\, d\sigma d\tau.\nonumber\\
&=&\int_{\mathbb{R}}(\bar{v}(\kappa)-\bar{v}(\tau))K(\kappa-\tau)\, d\tau\label{laT},
\end{eqnarray}
being
\begin{equation}\label{kernel}
K(t)=e^{-t\frac{N+2s}{2}}\int_{\mathbb{S}^{N-1}}\frac{1}{|1+e^{-2t}-2e^{-t}\langle\theta, \sigma\rangle|^{\frac{N+2s}{2}}}\, d\sigma,\quad t\in\mathbb{R}.
\end{equation}
We notice here that, in fact, the operators $\mathcal{L}$, $\mathcal{T}$ and also the constant $\mathcal{A}_{s,N}$ depend of $\beta$, but, since along all this section, this $\beta$ is fixed and defined in \eqref{beta}, for simplicity, we omit it. We show up now some useful properties, proved in \cite{Manuel} (see also Remark \ref{extranew} below), that we will use later
\begin{prop}\label{prop-kernel}
It is true that
\begin{itemize}
\item the constant $\mathcal{A}_{s,N}$ given \eqref{a} is positive,
\item $\displaystyle K(t)= c(N, s)\int_{0}^{\pi}\sin y^{N-2s}(\cosh\, t- \cos y)^{-\frac{N+2s}{2}}\, dy, $ for some $c(N, s)>0$,
\item the kernel $K$ is even, strictly positive and satisfies
\begin{equation}\label{importante}
K(t)\sim \frac{1}{t^{1+2s}},\, t\to 0,\quad K(t)\sim e^{-t\frac{N+2s}{2}},\, t\to\infty.
\end{equation}
That is, around the singularity of the origin, the kernel behaves like the ones of the fractional Laplacian operator in dimension one.
\end{itemize}
\end{prop}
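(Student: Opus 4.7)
The plan is to handle the three assertions in a slightly non-linear order: the integral representation goes first, because it reduces everything to a one-dimensional object that makes the asymptotics transparent, then the qualitative properties of $K$, and finally the positivity of $\mathcal{A}_{s,N}$, which needs a separate symmetrization trick. For the integral representation I will exploit the fact that the spherical integrand in \eqref{kernel} depends on $\sigma$ only through $\langle\theta,\sigma\rangle$. Parametrising $\sigma$ by spherical coordinates with $\theta$ as the pole collapses the $\sigma$-integral into a one-dimensional one with weight $\omega_{N-2}\sin^{N-2} y\,dy$, where $\omega_{N-2}=|\mathbb{S}^{N-2}|$; the algebraic identity $1+e^{-2t}-2e^{-t}\cos y = 2e^{-t}(\cosh t-\cos y)$ then absorbs the exponential prefactor into the constant $c(N,s)=2^{-(N+2s)/2}\omega_{N-2}$ and produces the claimed representation.

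Using this representation, the qualitative properties of $K$ fall out cleanly. Evenness is immediate from $\cosh(-t)=\cosh t$, and strict positivity from positivity of the one-dimensional integrand. For $t\to\infty$ I will pull out $\cosh t\sim e^t/2$ and apply dominated convergence on the bounded integrand $\sin^{N-2} y\,(1-\cos y/\cosh t)^{-(N+2s)/2}$, producing $K(t)\sim c\,e^{-t(N+2s)/2}$. For $t\to 0^+$, I plan to isolate the near-singularity region $y\in[0,\delta]$, where $\cosh t-\cos y\sim (t^2+y^2)/2$ and $\sin y\sim y$, and rescale $y=tu$; this brings out the scaling $t^{-1-2s}$ in front of $\int_0^{\pi/t} u^{N-2}(1+u^2)^{-(N+2s)/2}\,du$, whose limit as $t\to 0^+$ is finite because the integrand decays like $u^{-2-2s}$ at infinity. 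The complementary region where $y$ stays bounded away from $0$ contributes only $O(1)$ and is therefore subleading.

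For the positivity of $\mathcal{A}_{s,N}$, the key is a symmetrization in $\widetilde{\rho}$. I split the outer integral in \eqref{a} at $\widetilde{\rho}=1$ into $I_1+I_2$ and, in $I_2$, apply the involution $\widetilde{\rho}\mapsto 1/\widetilde{\rho}$. Using the scaling identity $|1+\widetilde{\rho}^{-2}-2\widetilde{\rho}^{-1}\langle\theta,\sigma\rangle|=\widetilde{\rho}^{-2}|1+\widetilde{\rho}^{\,2}-2\widetilde{\rho}\langle\theta,\sigma\rangle|$ and the explicit value $\beta=-(N-2s)/2$, the two pieces merge into a single integral on $(0,1)$ whose numerator becomes
$$\widetilde{\rho}^{\,N-1}+\widetilde{\rho}^{\,2s-1}-2\widetilde{\rho}^{\,(N+2s-2)/2}.$$
Since $(N+2s-2)/2$ is exactly the arithmetic mean of $N-1$ and $2s-1$, the AM-GM inequality yields strict positivity of this expression on $(0,1)$, and hence $\mathcal{A}_{s,N}>0$.

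The most delicate step is the small-$t$ asymptotic of $K$: one must localise carefully to $y\sim t$, verify that the bounded-$y$ remainder is truly $O(1)$ and not singular, and check that the rescaled limit integral converges, which is where the assumption $s\in(0,1)$ enters decisively (to place $u^{-2-2s}$ just inside the integrable range at infinity). The other two assertions reduce to the rotation-invariance identity on the sphere and the symmetrization plus AM-GM, respectively, and are essentially algebraic once the right substitutions have been chosen.
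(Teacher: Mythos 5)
The paper does not actually prove Proposition \ref{prop-kernel}: it cites \cite{Manuel} for all three items and, in Remark \ref{extranew}~iii), offers an \emph{alternative} route to item~i) only, via the explicit Gamma-function formula $c(\mu)=4^{s}\Gamma\bigl(\tfrac{-\mu+2s}{2}\bigr)\Gamma\bigl(\tfrac{N+\mu}{2}\bigr)/\bigl(\Gamma\bigl(\tfrac{N+\mu-2s}{2}\bigr)\Gamma\bigl(-\tfrac{\mu}{2}\bigr)\bigr)$ from \cite{Rupert}, evaluated at $\mu=\beta$. Your proposal therefore does something the paper doesn't: it gives a direct, self-contained proof. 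The individual steps are sound. Your reduction of the $\sigma$-integral gives $c(N,s)=2^{-(N+2s)/2}|\mathbb{S}^{N-2}|$ and the exponent $N-2$ on $\sin y$ (not $N-2s$ as printed in the statement, which must be a typo — indeed only the exponent $N-2$ reproduces the stated $t^{-1-2s}$ singularity under your rescaling $y=tu$, since $t^{(N-2)-(N+2s)+1}=t^{-1-2s}$ while $N-2s$ would give $t^{1-4s}$). Your symmetrization for $\mathcal{A}_{s,N}$ is the nicest part: after $\widetilde{\rho}\mapsto 1/\widetilde{\rho}$ on $(1,\infty)$, the combined numerator on $(0,1)$ becomes exactly $\widetilde{\rho}^{N-1}+\widetilde{\rho}^{2s-1}-2\widetilde{\rho}^{(N+2s-2)/2}$ because $\beta=-(N-2s)/2$ forces $N-1+\beta=2s-1-\beta=(N+2s-2)/2$, and AM--GM gives strict positivity on $(0,1)$ (using $N\neq 2s$). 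This is more elementary than invoking the Gamma-function formula, and it visibly exhibits why $\beta=-(N-2s)/2$ is exactly the value for which the constant is positive.

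Two small caveats. First, for $s\geq 1/2$ the pieces $I_1$ and $I_2$ of your split do \emph{not} converge absolutely (near $\widetilde{\rho}=1$, $\langle\theta,\sigma\rangle=1$, the local 2D integral behaves like $\int \rho^{-2s}\,d\rho$), so \eqref{a} must be read in the principal-value sense and the involution $\widetilde{\rho}\mapsto 1/\widetilde{\rho}$ should be performed on a symmetric truncation $(0,1/(1+\epsilon))\cup(1+\epsilon,\infty)$ before letting $\epsilon\to 0$; after the merge the numerator vanishes to second order at $\widetilde{\rho}=1$ and the combined integral converges absolutely for all $s\in(0,1)$. You should say this explicitly. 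Second, the remark that $s\in(0,1)$ ``enters decisively'' in the small-$t$ estimate overstates matters: the rescaled tail $u^{-2-2s}$ is integrable at infinity whenever $s>-1/2$; what $s<1$ actually controls is the absolute convergence of the symmetrized principal value just discussed.
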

\subsection{Existence of solutions.}
Our objective now is to obtain the existence of solutions of \eqref{eq-uve-buena} that will imply the existence of radially symmetric solutions of \eqref{Henon} when $p=p^{*}_{\alpha,s}$. Observe that, to be consistence with \cite[Theorem 1]{LuZhu} (and \cite{Yang}), the existence have to be proved, at least, for every $-2s<\alpha<0$. For that we will introduce the functional framework needed to work with. First of all we consider the Sobolev space
$$H^s_{K}(\mathbb{R}):=\left\{u:\mathbb{R}\to\mathbb{R}:\, u\in L^{2}(\mathbb{R})\mbox{ and } [u]_{H^s_{K}(\mathbb{R})}<\infty\right\},$$
where
$$[u]_{H^s_{K}(\mathbb{R}^{N})}:=\int_{\mathbb{R}}\int_{\mathbb{R}}(u(\kappa)-u(\tau))^2K(\kappa-\tau)\, d\tau\, d\kappa,
$$
with $K$ given in \eqref{kernel}. By Proposition \ref{prop-kernel} we deduce that $K(t)$ is an even, decreasing monotone function that satisfies $K\in L^{1}(\mathbb{R},\, \min \{|x|^2,1\}\, dx)$. Moreover it can be checked that
$$\sup_{\widetilde{s}\, \geq 0}\left\{\lim_{r\to 0} r^{2\, \widetilde{s}}\int_{B_1(0)\setminus B_{r}(0)} K(t)\, dt=\infty\right\}=s,$$
and
$$\varliminf_{r\to 0} r^{2s}\int_{B_1(0)\setminus B_{r}(0)} K(t)\, dt=\frac{1}{2s}>0.$$
Thus, by \cite[Theorem 3.1]{ChenHajaiej}  and a density argument we get the next
\begin{prop} (\it Sobolev inequalities) \label{sobolev-ineq}
Let $0<s<1/2$. If $u\in H^s_{K}(\mathbb{R})$ then
\begin{equation}\label{hhss}
\|u\|^2_{L^{2^{*}_{s}}(\mathbb{R})}\leq c [u]_{H^s_{K}(\mathbb{R})},
\end{equation}
for some $c>0$. Therefore the space $H^s_{K}(\mathbb{R})$ is continuously embedded in $L^q(\mathbb{R})$ for every $1\leq q\leq 2/(1-2s):=2^{*}_{s}$. 
\end{prop}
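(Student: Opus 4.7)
The plan is to deduce the Sobolev inequality \eqref{hhss} as a direct application of \cite[Theorem 3.1]{ChenHajaiej}, and then to obtain the $L^q$ embeddings by interpolation. All four hypotheses of that abstract theorem on the kernel $K$ have in fact already been recorded in the paragraph preceding the statement, so the main work is to confirm that they do follow from Proposition \ref{prop-kernel} and to carry out the final density and interpolation steps.

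First I would read off the hypotheses from Proposition \ref{prop-kernel}: evenness and strict positivity are immediate, and monotonicity on $(0,\infty)$ follows from the explicit integral representation of $K$ recorded there. The integrability $K\in L^1(\mathbb{R},\min\{|x|^2,1\}\,dx)$ is also easy from \eqref{importante}, since near the origin $|x|^2 K(x)\sim |x|^{1-2s}$ is integrable (as $s<1$) while the tail at infinity is controlled by the exponential decay of $K$. The two conditions pinning the fractional order of the operator at exactly $s$ follow from $K(t)\sim t^{-1-2s}$ as $t\to 0$ via the direct estimate $\int_r^1 t^{-1-2s}\,dt = \tfrac{1}{2s}(r^{-2s}-1)$, which gives $r^{2s}\int_{B_1\setminus B_r}K\to \tfrac{1}{2s}$ and the claimed supremum value for the fractional order.

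With the hypotheses in hand, \cite[Theorem 3.1]{ChenHajaiej} delivers \eqref{hhss} on a smooth dense class, typically $C_c^\infty(\mathbb{R})$. I would then extend the inequality to all of $H^s_K(\mathbb{R})$ by a standard density argument: given $u\in H^s_K(\mathbb{R})$, approximate $u$ by $u_n\in C_c^\infty(\mathbb{R})$ via mollification together with smooth truncation so that $u_n\to u$ in $L^2(\mathbb{R})$ and $[u_n-u]_{H^s_K(\mathbb{R})}\to 0$; after extracting an a.e.\ convergent subsequence, Fatou's lemma on the left-hand side of \eqref{hhss} yields the inequality for $u$. Finally, for $q\in[2,2^*_s]$ I would interpolate by H\"older: writing $1/q=\theta/2+(1-\theta)/2^*_s$ with $\theta\in[0,1]$ gives $\|u\|_{L^q}\leq \|u\|_{L^2}^\theta \|u\|_{L^{2^*_s}}^{1-\theta}$, and both factors are controlled by $\|u\|_{L^2(\mathbb{R})}+[u]_{H^s_K(\mathbb{R})}^{1/2}$.

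The only delicate point is the density step. What makes it go through cleanly is exactly the content of \eqref{importante}: near the origin $K$ has the same order of singularity as the one-dimensional fractional Laplacian kernel, so convolution with standard mollifiers converges in the $H^s_K$ seminorm by a routine Fubini/dominated-convergence argument; at infinity the exponential decay of $K$ makes smooth truncation essentially free. If this density statement is already incorporated in \cite{ChenHajaiej} (as hinted in the paragraph preceding the proposition), there is nothing further to prove.
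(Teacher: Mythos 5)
Your proposal follows essentially the same route as the paper: cite \cite[Theorem 3.1]{ChenHajaiej} for compactly supported functions (the kernel hypotheses having been checked via Proposition \ref{prop-kernel}), then extend to all of $H^s_K(\mathbb{R})$ by density of $C_c^\infty(\mathbb{R})$ — the paper delegates this density step to an adaptation of \cite[Lemma 2.2 and Lemma 2.3]{Fis}, while you sketch it directly via mollification, truncation, and Fatou, which amounts to the same thing. One small point worth noting: your interpolation correctly yields the $L^q$ embedding only for $q\in[2,2^*_s]$, whereas the statement as written claims $1\le q\le 2^*_s$; on the unbounded line $L^2(\mathbb{R})$ does not embed into $L^q(\mathbb{R})$ for $q<2$, so the lower range in the statement appears to be a slip in the paper rather than a gap in your argument.
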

\begin{proof} First of all we notice that in \cite[Theorem 3.1]{ChenHajaiej}  the authors obtained \eqref{hhss}
when $u\in H^s_{K}(\mathbb{R})$ has compact support. Thus, adapting to our setting \cite[Lemma 2.2 and Lemma 2.3]{Fis}, since we get that $\mathcal{C}_{0}^{\infty}(\mathbb{R})$ is dense in $H^s_{K}(\mathbb{R})$, by a standard density argument, we conclude \eqref{hhss} is true for all $u\in H^s_{K}(\mathbb{R})$, as wanted.
\end{proof}

Further, by \cite[Theorem 3.2]{ChenHajaiej}, we get
\begin{prop} (\it Compact Embeddings) \label{compact}
Let $0<s<1/2$, $\Omega\subseteq\mathbb{R}$ a bounded domain and $1\leq q< 2/(1-2s)$. Then every bounded sequence in $H^s_{K}(\mathbb{R})$ has a convergent subsequence in $L^{q}(\Omega)$. That is, the embedding $H^s_{K}(\mathbb{R}) \hookrightarrow L_{loc}^q(\mathbb{R})$ is compact.
\end{prop}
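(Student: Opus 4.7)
The plan is to combine the Sobolev embedding from Proposition \ref{sobolev-ineq} with a Fr\'echet--Kolmogorov compactness argument adapted to the kernel $K$. Let $\{u_n\}\subset H^s_K(\mathbb{R})$ be a bounded sequence, so that both $\|u_n\|_{L^2(\mathbb{R})}$ and $[u_n]_{H^s_K(\mathbb{R})}$ are uniformly bounded; by Proposition \ref{sobolev-ineq} the sequence is also uniformly bounded in $L^{2^*_s}(\mathbb{R})$.

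First I would establish compactness in $L^2(\Omega)$ via the classical Fr\'echet--Kolmogorov criterion. Tightness is automatic since $\Omega$ is bounded, so the core task is the uniform equicontinuity of translations, $\sup_n \|u_n(\cdot+h)-u_n(\cdot)\|_{L^2(\Omega)}\to 0$ as $h\to 0$. This is where the kernel information from Proposition \ref{prop-kernel} enters: the asymptotic $K(t)\sim |t|^{-1-2s}$ near $0$ gives $K(t)\geq c|t|^{-1-2s}$ on a neighbourhood of the origin, and after the change of variable $\tau=\kappa+y$ one obtains
\[
\int_{\mathbb{R}}|u_n(\kappa+h)-u_n(\kappa)|^2\, d\kappa \;\leq\; C\,|h|^{2s}\,[u_n]^2_{H^s_K(\mathbb{R})},
\]
by restricting the double integral defining $[u_n]^2_{H^s_K}$ to $\{|y|\leq 2|h|\}$ and using $|y|^{-1-2s}\leq C\,K(y)$ there, exactly as in \cite[Theorem 3.2]{ChenHajaiej}. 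This yields the required equicontinuity, so a subsequence $u_{n_k}$ converges in $L^2(\Omega)$.

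The case of general $1\leq q<2^*_s$ then follows by interpolation on the bounded set $\Omega$: for $1\leq q\leq 2$ H\"older's inequality gives $L^q(\Omega)$-convergence directly from the $L^2(\Omega)$-convergence; for $2<q<2^*_s$ I would apply
\[
\|u_{n_k}-u\|_{L^q(\Omega)}\leq \|u_{n_k}-u\|_{L^2(\Omega)}^{\theta}\,\|u_{n_k}-u\|_{L^{2^*_s}(\Omega)}^{1-\theta},
\]
with $\theta=\theta(q,s)\in(0,1)$ determined by $\frac{1}{q}=\frac{\theta}{2}+\frac{1-\theta}{2^*_s}$, and use the uniform $L^{2^*_s}$-bound from Proposition \ref{sobolev-ineq} to conclude.

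The main obstacle is the equicontinuity estimate, because $K$ behaves like the one-dimensional fractional Laplacian kernel only asymptotically near the diagonal; one must carefully isolate the singular part and verify that the far-field part (which decays exponentially by Proposition \ref{prop-kernel}) does not spoil the $|h|^{2s}$-decay of the translation seminorm. Once this is handled, the rest of the argument is a standard fractional Rellich--Kondrachov adaptation.
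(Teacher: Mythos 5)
Your proposal is correct and follows essentially the same route as the paper, which simply cites \cite[Theorem 3.2]{ChenHajaiej} for this compact embedding; your Fr\'echet--Kolmogorov argument (equicontinuity of translations from the near-diagonal lower bound $K(y)\gtrsim |y|^{-1-2s}$, followed by interpolation between $L^2$ and $L^{2^*_s}$ on the bounded set $\Omega$) is exactly the mechanism behind that cited result.
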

We highlight the important fact that $p^*_{\alpha,s}$, given in \eqref{critico}, is subcritical for the Sobolev space $H^{s}_{K}(\mathbb{R})$ in the sense that
$$p^*_{\alpha,s}=\frac{N+2s+2\alpha}{N-2s}<\frac{1+2s}{1-2s}= 2^{*}_{s}-1,\mbox{when $s<1/2$,}$$ 
for every $\alpha<0$ and with some restrictions in the case that $\alpha>0$. In fact, since $N>2s>1$,
$$2s+\alpha(1-2s)<2s<2sN,\mbox{ when $\alpha<0$}.$$
In the case $\alpha>0$ we have to add the hypothesis
$$\alpha<\frac{2s(N-1)}{1-2s}\left(\stackrel{ s\to 1/2^{-}}{\longrightarrow}\infty\right).$$
There is nothing to prove in the case $s\geq1/2$ because there is not critical Sobolev exponent in dimension one. We also notice that $p^*_{\alpha,s}>1$ because $-2s<\alpha$. Therefore, by \eqref{importante} and the previous observation, using Propositions \ref{sobolev-ineq}-\ref{compact}, following verbatim the proof of \cite[Theorem 1.3]{FelmerQuaasTan}  based on variational method together with concentration compactness principle we conclude the next existence result.
\begin{theorem}\label{existencia1}
If
\begin{itemize}
\item [i)] $-2s<\alpha$ when $1/2\leq s<1,$ or
\item [ii)] $-2s<\alpha<\frac{2s(N-1)}{1-2s}$ when $0<s<1/2,$
\end{itemize}
then the problem 
\begin{equation}\label{eq-uve-buena}
\mathcal{T} \bar{v} + \mathcal{A}_{s,N} \bar{v}= \bar{v}^{p^*_{\alpha,s}} \mbox{ in $\mathbb{R}$,}
\end{equation}
where $\mathcal{T}$ and $K$ where given in \eqref{laT} and \eqref{kernel} respectively, has a nonnegative variational solution. Moreover, since $f(t)=t^{p^*_{\alpha,s}}$ is a H\"older function, the solution is classical and, in fact, is positive.
\end{theorem}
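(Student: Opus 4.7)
The plan is to mirror the variational strategy of \cite{FelmerQuaasTan}, adapted to the Hilbert space $H^s_K(\mathbb{R})$ introduced above. I would equip this space with the natural norm $\|u\|^2 := [u]^2_{H^s_K(\mathbb{R})} + \mathcal{A}_{s,N}\|u\|^2_{L^2(\mathbb{R})}$, which is coercive thanks to $\mathcal{A}_{s,N}>0$ (Proposition \ref{prop-kernel}), and work with the energy functional
\begin{equation*}
J(\bar v) = \tfrac{1}{2}\|\bar v\|^2 - \tfrac{1}{p^{*}_{\alpha,s}+1}\int_{\mathbb{R}}(\bar v_+)^{p^{*}_{\alpha,s}+1}\,d\kappa.
\end{equation*}
The subcriticality statement $p^{*}_{\alpha,s}+1 < 2^{*}_s$ recorded just before the theorem, together with the Sobolev embedding of Proposition \ref{sobolev-ineq} (trivial when $s\ge 1/2$), shows that $J$ is well defined and $C^1$ on $H^s_K(\mathbb{R})$; the growth condition $p^{*}_{\alpha,s} > 1$ gives the standard mountain-pass geometry, yielding a Palais–Smale sequence at the min-max level
\begin{equation*}
c = \inf_{\gamma\in\Gamma}\max_{t\in[0,1]} J(\gamma(t)) > 0.
\end{equation*}

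The main obstacle is the lack of compactness: the operator $\mathcal{T}$ and the constant term are translation-invariant in $\kappa$, so minimizing/PS sequences can drift to infinity; additionally $H^s_K(\mathbb{R})$ is only \emph{locally} compactly embedded into $L^{p^*_{\alpha,s}+1}$ by Proposition \ref{compact}. I would handle this via Lions' concentration–compactness principle applied to the sequence of probability measures $|u_n|^{p^{*}_{\alpha,s}+1}/\|u_n\|^{p^{*}_{\alpha,s}+1}_{p^{*}_{\alpha,s}+1}$. Vanishing is ruled out because $u_n\to 0$ locally in $L^{p^{*}_{\alpha,s}+1}$ would, together with the uniform $H^s_K$-bound, force $\|u_n\|_{p^{*}_{\alpha,s}+1}\to 0$, contradicting $c>0$; dichotomy is ruled out by the strict subadditivity of the minimization level (a direct consequence of the homogeneity of the nonlinearity and positivity of $\mathcal A_{s,N}$). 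Thus, after a suitable translation $u_n(\cdot-\kappa_n)$, the sequence concentrates and converges strongly in $L^{p^{*}_{\alpha,s}+1}$ to a nontrivial weak limit $\bar v\in H^s_K(\mathbb{R})$, which is a critical point of $J$ and hence a weak solution of \eqref{eq-uve-buena}.

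It remains to upgrade $\bar v$ to a classical positive solution. Replacing $\bar v$ by $|\bar v|$ if necessary (the energy is unchanged since the quadratic form associated with the positive symmetric kernel $K$ satisfies $[|u|]_{H^s_K}\le [u]_{H^s_K}$), one can assume $\bar v\ge 0$. Hölder regularity is obtained by a bootstrap argument analogous to the one sketched in Lemma \ref{weak}: because of the local behavior $K(t)\sim |t|^{-1-2s}$ near the origin, the operator $\mathcal{T}$ shares the Schauder-type estimates of the one-dimensional fractional Laplacian, and since $f(t)=t^{p^{*}_{\alpha,s}}$ is locally Hölder continuous on $[0,\infty)$ under our assumptions, the iteration yields $\bar v\in C^\infty(\mathbb{R})$. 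Finally, a strong maximum principle for $\mathcal{T}+\mathcal{A}_{s,N}$—which follows from the strict positivity of the kernel $K$ in Proposition \ref{prop-kernel}—rules out interior zeros of the nonnegative nontrivial solution $\bar v$, giving $\bar v>0$ on $\mathbb{R}$ and completing the proof.
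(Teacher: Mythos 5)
Your proposal is correct and follows essentially the same route as the paper: the authors also obtain the solution by running the variational/mountain-pass argument with Lions' concentration--compactness (they cite following verbatim the proof of \cite[Theorem 1.3]{FelmerQuaasTan}, using Propositions \ref{sobolev-ineq}--\ref{compact} and the subcriticality of $p^*_{\alpha,s}$), and then upgrade to a classical positive solution via an $L^\infty$ bound and a Schauder-type bootstrap for the kernel $K$. The only difference is one of presentation: you spell out the standard steps (ruling out vanishing and dichotomy, translation invariance, replacing $\bar v$ by $|\bar v|$, strong maximum principle) that the paper leaves to the cited references.
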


The regularity is obtained again by a bootstrapping argument  based on \cite[Proposition 3.10 and Proposition 3.11]{Manuel} (see also \cite{schauder}) and on a $L^\infty$ bound of Proposition \ref{decaimiento} below.

As we commented before, the condition ii) of the previous theorem is trivially true for every $0<s<1/2$ and $-2s<\alpha<0$ as we expected. Moreover we emphasize that the previous result imply the existence of positive and radially symmetric solutions of the critical non local H\'enon equation even when $\alpha>0$, {that, as far as we know, it was not proved until the date.}
\subsection{Qualitative properties of solutions. The ``bubble". }
We prove now a qualitative property of the solutions of \eqref{eq-uve-buena} that will be the key step to find, later, the shape of the ``bubble" for the nonlocal H\'enon equation. More precisely we want to obtain that the solutions decay to zero in (plus and minus) infinity. Before going to the statement and proof of this result, we observe that reverse of the operator $\mathcal{T}$ seems not simple at all so we cannot follows closely the proof of \cite[Theorem 3.4]{FelmerQuaasTan} in order to get the desired qualitative property. Thus, instead of use this well known approach we will prove directly that the solutions are bounded and regular enough to conclude the desired decay.
\begin{prop}\label{decaimiento}
If $u\in H^s_{K}(\mathbb{R})$ is a positive variational solution of \eqref{eq-uve-buena} given by Theorem \ref{existencia1} then $u\in L^\infty(\mathbb{R})$ and $u(t)\to 0$ when $|t|\to \infty$.
\end{prop}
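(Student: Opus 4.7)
The plan has two parts: first a Moser-type iteration to obtain $u \in L^\infty(\mathbb{R})$, and then a soft argument combining $L^2$-integrability with continuity to obtain decay at infinity.

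For the $L^\infty$ bound I would run a De Giorgi / Moser iteration directly on $\mathcal{T}u + \mathcal{A}_{s,N}\, u = u^{p^*_{\alpha,s}}$. Fix a truncation level $L>0$, set $u_L := \min(u,L)$, and test the equation with $u_L^{2\gamma-2}u$ for $\gamma>1$. The nonlocal Kato-type pointwise inequality
$$(a-b)\bigl(a_L^{2\gamma-2}a - b_L^{2\gamma-2}b\bigr) \geq c_\gamma \bigl(a_L^{\gamma-1}a - b_L^{\gamma-1}b\bigr)^2,$$
together with the symmetry of $K$, leads to the energy estimate
$$[u_L^{\gamma-1}u]_{H^s_K(\mathbb{R})}^{2} + \mathcal{A}_{s,N}\int_\mathbb{R} u_L^{2\gamma-2}u^2\, dt \;\leq\; C\int_\mathbb{R} u^{p^*_{\alpha,s}+2\gamma-1}\, dt.$$
Applying the Sobolev embedding of Proposition \ref{sobolev-ineq} to the left-hand side upgrades integrability from exponent $p^*_{\alpha,s}+2\gamma-1$ to $\gamma\cdot 2^*_s$. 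The strict inequality $p^*_{\alpha,s} < 2^*_s - 1$, which is exactly the hypothesis ii) of Theorem \ref{existencia1} in the range $s<1/2$ (and is automatic for $s\geq 1/2$ since there is no critical Sobolev exponent in dimension one), guarantees that a geometric sequence $\gamma_k\to\infty$ closes the iteration; sending $L\to\infty$ yields $\|u\|_{L^\infty(\mathbb{R})}<\infty$.

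Once $u\in L^\infty$, the Schauder-type regularity cited after Theorem \ref{existencia1} (the bootstrapping based on \cite{Manuel, schauder}) shows that $u$ is globally H\"older continuous on $\mathbb{R}$, hence uniformly continuous. Combined with $u\in H^s_K(\mathbb{R})\subset L^2(\mathbb{R})$, the decay $u(t)\to 0$ as $|t|\to\infty$ then follows from a standard soft argument: if instead $u(t_n)\geq c>0$ along some $|t_n|\to\infty$, uniform continuity provides a fixed $\delta>0$ with $u\geq c/2$ on every interval $(t_n-\delta,t_n+\delta)$, and passing to a subsequence with $|t_n-t_m|>2\delta$ yields disjoint intervals on which $\int u^2$ picks up at least $c^2\delta/2$, contradicting $u\in L^2(\mathbb{R})$.

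The main technical obstacle I anticipate is the Moser step: one must verify that the kernel $K$, which is only comparable to (not equal to) the one-dimensional fractional Laplacian kernel near the origin by Proposition \ref{prop-kernel}, still supports both the pointwise Kato inequality and the symmetrization that produces the energy estimate above. The exponential decay of $K$ at infinity is an asset rather than an obstacle: together with the coercive zero-order term $\mathcal{A}_{s,N}u$ it makes the bilinear form strictly coercive and absorbs the tails that would otherwise obstruct the iteration, so no cut-off in the $\kappa$-variable is required.
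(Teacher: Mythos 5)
Your proposal follows essentially the same two-stage strategy as the paper: a Moser-type iteration for the $L^\infty$ bound, followed by a soft argument combining integrability, boundedness, and (Hölder) continuity to force decay at infinity. The paper runs the Moser step by testing with the Lipschitz-truncated power $\varphi_{M,\beta}(u)$ and bounding the bilinear form from below via the convexity (Kato) inequality $\mathcal{T}(\varphi(u))\leq\varphi'(u)\mathcal{T}(u)$, whereas you test with $u_L^{2\gamma-2}u$ and invoke the algebraic inequality $(a-b)(a_L^{2\gamma-2}a-b_L^{2\gamma-2}b)\geq c_\gamma(a_L^{\gamma-1}a-b_L^{\gamma-1}b)^2$; both are standard and interchangeable routes to the same energy estimate, and both close the iteration because $p^*_{\alpha,s}<2^*_s-1$. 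One small remark on your anticipated obstacle: the pointwise algebraic inequality you quote is independent of the kernel $K$ (it is a statement about real numbers $a,b$ only), so the fact that $K$ is merely comparable to the fractional kernel near the origin is not an issue there — the kernel enters only through the Sobolev inequality of Proposition~\ref{sobolev-ineq}, which the paper has already established. For the decay, you use $u\in L^{2}(\mathbb{R})$ (built into the definition of $H^s_K$) plus uniform continuity, while the paper invokes $u\in L^{2^*_s}(\mathbb{R})\cap C^\gamma(\mathbb{R})\cap L^\infty(\mathbb{R})$; these are equivalent soft arguments.
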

\begin{proof}
The key point of the proof is that, since for every convex function $\varphi$,
\begin{equation}\label{convexa}
\mathcal{T}(\varphi (u))\leq \varphi'(u)\mathcal{T}(u),
\end{equation}
we can adapt the ideas done in \cite[Proposition 2.2]{Bego} 
for the non bounded space $\mathbb{R}$ in order to obtain that 
\begin{equation}\label{bounded}
u\in L^{\infty}(\mathbb{R}).
\end{equation}

In fact let us define, for $\beta \ge 1$ and $M>0$ large,
$$
\varphi(x)=\varphi_{M,\beta}(x)=\left\{
\begin{array}{ll}
0, \quad&{\rm if} \quad t\leq0\\[2mm]
x^\beta, \quad&{\rm if} \quad 0<x<M\\[2mm]
\beta M^{\beta-1} (x-M)+M^\beta, \quad&{\rm if} \quad x\ge M.
\end{array}\right.
$$
Since $\varphi$ is Lipschitz, with constant $K=\beta M^{\beta-1} $, it is clear that $\varphi(u) \in H^s_{K}(\mathbb{R})$. Moreover by \eqref{convexa}, since $\varphi(u)\varphi'(u)\geq 0$ and $u>0$,
$$ \int_\mathbb{R} \varphi(u)\mathcal{T}(\varphi (u))\, d\tau\leq  \int_\mathbb{R}  \varphi(u)\varphi'(u) u^{p^*_{\alpha,s}}. $$
Using now that $u\varphi'(u)\leq \beta\varphi(u)$ by Proposition \ref{sobolev-ineq} and the fact that $p^*_{\alpha,s}<2^*_s-1$, the above estimate becomes
\begin{equation} \label{two}
\left(\int_\mathbb{R} \left(\varphi(u)\right)^{2^*_s}\right)^{\frac{2}{{2^*_s}}}\le C\, \beta \int_\mathbb{R} \left(\varphi(u)\right)^{2}u^{2^*_{s}-2}.
\end{equation}
Since $\beta\geq 1$ and $\varphi(u)$ is linear when $u\geq M$, it can be checked that both sides of (\ref{two}) are finite.

\
Let $\beta_1$ be such that $2\beta_1=2^*_{s}+1$ and  $R$ large to be determined later. Then, H\"older's inequality with $p=(2\beta_1-1)/2=2^*_{s}/2$ and $p'=2^*_{s}/(2^*_{s}-2)$ gives
\begin{eqnarray*}
\int_\mathbb{R} \left(\varphi(u)\right)^{2}u^{2^*_{s}-2}&\leq& \int_{\{u\le R\}} \frac{\left(\varphi(u)\right)^{2}}{u}R^{\, 2^*_{s}-1}\\
&+&\left(\int_\mathbb{R} \left(\varphi(u)\right)^{2^*_{s}}\right)^{\frac{2}{{2^*_{s}}}} \left(\int_{\{u>R\}} u^{2^*_{s}}\right)^{\frac{2^*_{s}-2}{2^*_{s}}}.
\end{eqnarray*}
By the Monotone Convergence Theorem, we may take $R$ so that
$$
\left(\int_{\{u>R\}} u^{2^*_{s}}\right)^{\frac{2^*_{s}-2}{2^*_{s}}}\le \frac 1{2 \, C\, \beta_1}.
$$
In this way, the second term above is absorbed by the left hand side of (\ref{two}) to get
$$
\left(\int_\mathbb{R} \left(\varphi(u)\right)^{2^*_s}\right)^{2/{2^*_s}} \le 2\, C\, \beta_1 \,
\left(\int_{\{u\le R\}} \frac{\left(\varphi(u)\right)^{2}}{u}R^{2^*_s-1} \right).
$$
Letting $M\rightarrow \infty$ in the left hand side, it follows that 
\begin{equation*}
\left(\int_\mathbb{R} u^{2^*_s\beta_1}\right)^{2/{2^*_s}} \le 2\, C\, \beta_1 \,
\left(R^{2^*_s-1} \int_\mathbb{R} u^{2^*_s}\right)<\infty.
\end{equation*}
Wich implies that $u\in L^{\beta_1 \, 2^*_{s}}(\mathbb{R})$. Let us now consider $\beta>\beta_1$. Going back to inequality (\ref{two}) and using, as before,  that $\varphi_{M,\beta}(u)\leq u^{\beta}$ in the right hand side and taking $M\rightarrow \infty$ in the left hand side we obtain
$$
\left(\int_\mathbb{R} u^{2^*_s\beta}\right)^{2/{2^*_s}} \le C\, \beta \,
\left(\int_\mathbb{R} u^{2\beta+2^*_s-2} \right).
$$
where $C>0$ is independent of $\beta$. Thus
\begin{equation}\label{c}
\left(\int_\mathbb{R} u^{2^*_s\beta}\right)^{\frac{1}{2^*_s(\beta-1)}}  \le (C\beta)^{\frac{1}{2(\beta-1)}} \left(\int_\mathbb{R} u^{2\beta+2^*_s-2} \right)^{\frac{1}{2(\beta-1)}}
\end{equation}
Defining now the same iterative process as in \cite{Bego} we get \eqref{bounded}. In fact let define $\beta_{m+1}$, $m\geq 1$ such that
so that 
$$2\beta_{m+1}+ 2^*_s-2=2^*_s\beta_m.$$Therefore
$$\beta_{m+1}-1=\left(\frac {2^*_s} 2\right)^m(\beta_1-1),\, m\geq 1$$
and replacing it in \eqref{c} we get
$$
\left(\int_\mathbb{R} u^{2^*_s\beta_{m+1}}\right)^{\frac{1}{2^*_s(\beta_{m+1}-1)}}  \le (C\beta_{m+1})^{\frac{1}{2(\beta_{m+1}-1)}} \left(\int_\mathbb{R}u^{2^*_s\beta_m} \right)^{\frac{1}{2^*_s(\beta_m-1)}},
$$
Then, defining for $m\geq 1$
$$A_m:=\left(\int_\mathbb{R} u^{2^*_s\beta_m}\right)^{\frac{1}{2^*_s(\beta_m-1)}}\mbox{ and } C_{m+1}:=C\beta_{m+1},$$
using a limiting argument, we conclude that there exists $C_0>0$, independent of $m>1$, such that
$$A_{m+1}\leq \prod_{k=2}^{m+1}{C_{k}^{\frac{1}{2(\beta_k-1)}}}A_1\leq C_0A_1,$$
wich implies that $\|u\|_{L^\infty(\mathbb{R})}\leq  C_0A_1.$

\
Once we have proved \eqref{bounded} it is possible to apply \cite[Theorem 1.1]{Kas} (see \cite[Proposition 3.10]{Manuel}) to obtain the H\"older regularity of the solution. More precisely we deduce that there exists $0<\gamma<1$ and $c>0$, depending on $s$, such that for every $R>0$
\begin{equation}\label{holder}
\frac{|u(\tau)-u(\kappa)|}{|\tau-\kappa|^{\gamma}}\leq \frac{c}{R^{\gamma}}\|u\|_{L^{\infty}(\mathbb{R})},\quad |\tau|\leq R,\, |\kappa|\leq R.
\end{equation}

Namely, recovering $\mathbb{R}$ with arbitrary balls $[-R, R]$, by \eqref{bounded} and \eqref{holder} we have obtained that $u\in L^{\infty}(\mathbb{R})\cap \mathcal{C}^{\gamma}(\mathbb{R})\cap L^{2^*_{s}}(\mathbb{R})$ which implies that $u(t)\to 0$ when $|t|\to \infty$ completing the proof.
\end{proof}

We are able now to obtain good bounds for the solutions of \eqref{eq-uve-buena}, that is, the kind of estimate given in c) of Theorem \ref{existencia}. In fact, if we prove that the solutions of \eqref{eq-uve-buena} satisfy $\bar{v}(t)\leq Ce^{-\left(\frac{N-2s}{2}\right)|t|},$ $t\in\mathbb{R}$,
undoing the Emden Folwer change of variable, we will get
\begin{equation}\label{star}
v(r)\leq C\left\{
\begin{aligned}
r^{\frac{N-2s}{2}} &&  r\to 0,\\
r^{-\frac{N-2s}{2}}&& r\to \infty.
\end{aligned}\right.
\end{equation}
Therefore by \eqref{dobleuve}, this will imply that
\begin{equation}\label{collar}
r^{-\frac{N-2s}{2}}v(r)\leq C\left\{
\begin{aligned}
1&&  r\to 0,\\
r^{-(N-2s)}&& r\to \infty,
\end{aligned}\right.
\end{equation}
that is, we get a bound for the solution of \eqref{Henon}.
Thus our next objective is the following
\begin{prop}\label{46}
If $u\in H^s_{K}(\mathbb{R})$ is a nonnegative variational solution of \eqref{eq-uve-buena} given by Theorem \ref{existencia1} then there exists $C>0$ such that 
\begin{equation}\label{cota}
u(t)\leq Ce^{-\left(\frac{N-2s}{2}\right)|t|},\quad t\in\mathbb{R}. 
\end{equation}
\end{prop}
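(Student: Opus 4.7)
The plan is to establish the sharp bound in two steps: first a preliminary exponential decay $u(t)\leq C_\gamma e^{-\gamma|t|}$ for any $\gamma<\frac{N-2s}{2}$ via a barrier/maximum-principle argument, then a bootstrap to the sharp rate $\gamma=\frac{N-2s}{2}$ via an integral representation pulled back from the original equation.

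For Step 1 the barrier is $\phi(t):=Ce^{-\gamma|t|}$. Splitting the kernel at $\tau=0$ one computes, for $t>0$,
\begin{equation*}
\mathcal{T}\phi(t) = M(\gamma)\phi(t) + 2\int_0^\infty K(t+\sigma)\sinh(\gamma\sigma)\,d\sigma,
\end{equation*}
where $M(\gamma):=\int_\mathbb{R} K(h)(1-\cosh(\gamma h))\,dh$ is continuous and strictly decreasing on $[0,\tfrac{N+2s}{2})$; the second integral is positive, so $\mathcal{T}\phi\geq M(\gamma)\phi$ on $\mathbb{R}\setminus\{0\}$. Since $\bar v(\kappa)=e^{\beta\kappa}$ is a formal solution of the homogeneous equation $\mathcal{T}\bar v+\mathcal{A}_{s,N}\bar v=0$ (reflecting $(-\Delta)^s|x|^{2s-N}=0$), one has $M(\tfrac{N-2s}{2})=-\mathcal{A}_{s,N}$, so $M(\gamma)+\mathcal{A}_{s,N}>0$ whenever $\gamma<\tfrac{N-2s}{2}$. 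Using that $u\to 0$ at $\pm\infty$ (Proposition \ref{decaimiento}), pick $\varepsilon>0$ with $M(\gamma)+\mathcal{A}_{s,N}-\varepsilon>0$ and $R>0$ so that $u^{p^*_{\alpha,s}-1}<\varepsilon$ on $\{|t|>R\}$. Then
\begin{equation*}
\mathcal{T}u+(\mathcal{A}_{s,N}-\varepsilon)u\leq 0\ \text{on}\ \{|t|>R\},\qquad \mathcal{T}\phi+(\mathcal{A}_{s,N}-\varepsilon)\phi\geq 0\ \text{on}\ \mathbb{R}\setminus\{0\}.
\end{equation*}
Choosing $C$ large enough that $\phi\geq u$ on $[-R,R]$, and noting $u-\phi\to 0$ at $\pm\infty$, a contradiction at a positive maximum of $u-\phi$ (impossible since there $\mathcal{T}(u-\phi)\geq 0$ and $(\mathcal{A}_{s,N}-\varepsilon)(u-\phi)>0$) gives $u\leq\phi$ on $\mathbb{R}$.

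For Step 2, fix $\gamma\in\bigl(\tfrac{N-2s}{2p^*_{\alpha,s}},\tfrac{N-2s}{2}\bigr)$ (nonempty since $p^*_{\alpha,s}>1$), so Step 1 gives $u(t)\leq Ce^{-\gamma|t|}$. The function $w(x):=|x|^{-(N-2s)/2}u(\ln|x|)$ is a radial distributional solution of the H\'enon equation which, by that preliminary bound, is locally bounded on all of $\mathbb{R}^N$, so Lemma \ref{weak} yields $w(x)=\int_{\mathbb{R}^N}|y|^{\alpha}w^{p^*_{\alpha,s}}(y)|x-y|^{-(N-2s)}dy$. Passing to polar coordinates and undoing the Emden--Fowler substitution, using the scale invariance of the spherical average of $|x-y|^{-(N-2s)}$ together with the critical identity $\alpha+\beta p^*_{\alpha,s}=-2s+\beta$, this becomes
\begin{equation*}
u(\kappa)\,e^{\frac{N-2s}{2}\kappa} \;=\; c\int_\mathbb{R} e^{\frac{N-2s}{2}\tau}u^{p^*_{\alpha,s}}(\tau)\,J(\tau-\kappa)\,d\tau,
\end{equation*}
where $J\in L^\infty(\mathbb{R})$ satisfies $J(-\infty)=1$ and $J(\sigma)\lesssim e^{-(N-2s)\sigma}$ as $\sigma\to+\infty$. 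The condition $\gamma p^*_{\alpha,s}>\tfrac{N-2s}{2}$ ensures that $e^{\frac{N-2s}{2}\tau}u^{p^*_{\alpha,s}}(\tau)$ is integrable on $\mathbb{R}$, and bounding $J$ by its supremum already gives $u(\kappa)\leq C'e^{-\frac{N-2s}{2}\kappa}$ as $\kappa\to+\infty$. For $\kappa\to-\infty$ one splits the integral at $\tau=\kappa$: on $\{\tau>\kappa\}$ the decay $J(\tau-\kappa)\lesssim e^{-(N-2s)(\tau-\kappa)}$ absorbs the growth factor $e^{-\frac{N-2s}{2}\kappa}$; on $\{\tau<\kappa\}$ the preliminary exponential decay of $u^{p^*_{\alpha,s}}$ at $-\infty$ closes the estimate. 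One concludes $u(\kappa)\leq Ce^{\frac{N-2s}{2}\kappa}$ and therefore \eqref{cota}.

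The main obstacle is the $\kappa\to-\infty$ analysis of Step 2: the prefactor $e^{-\frac{N-2s}{2}\kappa}$ grows exponentially and must be matched against the decay of $J$ on $\{\tau>\kappa\}$ and the decay of $u^{p^*_{\alpha,s}}$ on $\{\tau<\kappa\}$; the fact that Step 1 furnishes any rate $\gamma<\frac{N-2s}{2}$, and in particular $\gamma>\frac{N-2s}{2p^*_{\alpha,s}}$, is precisely what makes both contributions uniformly controlled in $\kappa$.
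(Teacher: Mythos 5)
Your Step 1 is sound and is essentially a reformulation of the paper's preliminary estimate: the quantity $M(\gamma)+\mathcal{A}_{s,N}$ you compute is, after the change of variables $\mu+\tfrac{N-2s}{2}=\pm\gamma$, exactly $c(\mu)$ from \eqref{blanca}, and the comparison against $\phi$ recovers the bounds \eqref{trapera}. Your Step 2, however, departs from the paper's route (which stays in the Emden--Fowler picture and builds the refined barrier $\widetilde{v}_{2,M}+\widetilde{v}_1$, a multiple of the homogeneous solution $e^{-t(N-2s)/2}$ plus an explicit particular solution manufactured from $c(\mu)$), and it contains a genuine gap.

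The gap is the assertion that $w(x)=|x|^{-(N-2s)/2}u(\ln|x|)$ is locally bounded on all of $\mathbb{R}^N$ after Step 1. With any rate $\gamma<\tfrac{N-2s}{2}$, near the origin one only gets $w(x)\le C|x|^{\gamma-\frac{N-2s}{2}}$, whose exponent is strictly negative, so $w$ \emph{blows up} at $0$ and $w\notin L^\infty_{loc}$. Indeed local boundedness of $w$ at the origin is equivalent to the decay $u(\kappa)\le Ce^{\frac{N-2s}{2}\kappa}$ as $\kappa\to-\infty$ that you are trying to prove, so invoking Lemma~\ref{weak} here is circular. There is a second, related circularity: the paper only shows that the pullback $w$ is a distributional solution of \eqref{Henon} through the origin (rather than merely a pointwise one on $\R^N\setminus\{0\}$) \emph{after} establishing the sharp bound \eqref{cota}, since that argument in the proof of Theorem~\ref{existencia} uses the fast decay \eqref{collar}. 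To rescue Step 2 one would need to prove, without the sharp bound, both that $w$ solves \eqref{Henon} distributionally across $0$ (e.g. via a removable-singularity argument exploiting $\gamma-\tfrac{N-2s}{2}>-(N-2s)$) and that the integral representation \eqref{eqint} extends to solutions with an integrable but unbounded singularity at the origin; neither step is supplied, and the second is not covered by Lemma~\ref{weak} as stated.
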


\begin{proof}
First of all we notice that by \cite[Lemma 3.1, Corollary 3.1 and Remark 3.1]{felmer}, see also Remark \ref{extranew}, there exists a strictly concave function $c$ define on $(-N,2s)$ such that
\begin{equation}\label{blanca}
(-\Delta)^s r^{\mu}= c(\mu) r^{\mu-2s},\quad -N<\mu<2s.
\end{equation}
The function $c(\mu)$ is positive in $(-N+2s,0)$, has two zeros $c(0)=c(-N+2s)=0$ and satisfies $\displaystyle \lim_{\mu\to-N^{+}} c(\mu)=\lim_{\mu\to 2s^{-}} c(\mu)=-\infty$. Thus, considering $v(r):=r^{\mu-\beta}$ in \eqref{eq-dobleuve}, from \eqref{blanca}, it follows that
$$\mathcal{L} r^{\mu-\beta} + \mathcal{A}_{s,N} r^{\mu-\beta}= c(\mu) r^{\mu-\beta},\quad r>0,\quad  -N<\mu<2s,$$
where $\beta$ was given in \eqref{beta}. Doing the Emden Fowler change of variable, the previous identity is equivalent to 
\begin{equation}\label{auxiliar}
\mathcal{T} v_{\mu}(t) + \mathcal{A}_{s,N} v_{\mu}(t)= c(\mu) v_{\mu}(t), \mbox{with $v_{\mu}(t)=e^{t\left(\mu+\frac{N-2s}{2}\right)}$,}
\end{equation}
for every $t\in\mathbb{R},$ and $-N<\mu<2s$. Let fix now $R>0$ big enough and let us consider $u\geq 0$ a weak solution of \eqref{eq-uve-buena}. By Theorem \ref{existencia1} we know that this solution is classical and therefore we can evaluate the operator pointwise. Using now Proposition \ref{decaimiento} there exists $\delta(R)>0$ such that
$$\mathcal{T} u(t) + \mathcal{A}_{s,N} u(t)=u^{p^*_{\alpha,s}}(t)\leq \delta u(t),\quad |t|>R.$$
Since for some $-N<\mu_1<\mu_2<0\, (<2s)$ and the properties of the function $c(\mu)$, it is true that $c(\mu_i)=\delta$, $i=1,\, 2,$ using comparison twice and the fact that $u$ is bounded, from \eqref{auxiliar} we get that there exists $C>0$ such that 
$$u(t)\leq C_{i}e^{t\left(\mu_{i}+\frac{N-2s}{2}\right)_{i=1,\, 2}},\quad\mbox{for some $\mu_i$ and $|t|>R$.}$$
More precisely, writing
$$\mu_1:=(-N+2s)+\varepsilon_1\mbox{ and }\mu_2:=-\varepsilon_2,$$
for some positive $\varepsilon_1(\delta),\, \varepsilon_2(\delta)$ smaller than $N$, we get 
\begin{equation}\label{trapera}
u(t)\leq C_{1}e^{t\left(-\frac{N-2s}{2}+\varepsilon_{1}\right)}\mbox{ and } u(t)\leq C_{2}e^{t\left(\frac{N-2s}{2}-\varepsilon_{2}\right)},\quad |t|>R.
\end{equation}
Without loss of generality we prove \eqref{cota} for the case $t>0$ and the proof can be easily adapted for $t\leq 0$ (see Remark \ref{extranew} below). 
We define now
$$\widetilde{\mu}_1:=\left(\varepsilon-\frac{N-2s}{2}\right)p^*_{\alpha,s}> \left(\mu_1+\frac{N-2s}{2}\right)p^*_{\alpha,s}\left(>\mu_1\right),$$
with $\varepsilon>\varepsilon_1$ that will be chosen later (see \eqref{el-epsilon}). Since by \eqref{trapera} it is clear that
\begin{equation}\label{gripe}
\mathcal{T} u(t) + \mathcal{A}_{s,N} u(t)\leq e^{t\, \widetilde{\mu}_1},\quad |t|>R,
\end{equation}
our next objetive is to find a suitable function $\widetilde{u}_1$ satisfying \eqref{cota} and 
\begin{equation}\label{laec}
\mathcal{T}\, \widetilde{u}_1(t) + \mathcal{A}_{s,N} \widetilde{u}_1=e^{t\, \widetilde{\mu}_1}, \quad |t|>R,
\end{equation}
that will allow us to apply a comparison principle and obtain the same bounded estimate \eqref{cota} for the function $u$. For that, on the first hand, we notice that by \eqref{auxiliar}
$$\mathcal{T} e^{t\, \widetilde{\mu}_1} + \mathcal{A}_{s,N} e^{t\, \widetilde{\mu}_1}= c\left(\widetilde{\mu}_1-\frac{N-2s}{2}\right) e^{t\, \widetilde{\mu}_1},$$
as long as 
$$-N<\widetilde{\mu}_1-\frac{N-2s}{2}<2s.$$
Thus, for some $\varepsilon>0$, chosen later (see \eqref{el-epsilon}), the function
$$\widetilde{v}_1(t):=\frac{1}{c\left(\widetilde{\mu}_1-\frac{N-2s}{2}\right)}  e^{t\, \widetilde{\mu}_1},\quad t\in\mathbb{R},$$
is a particular solution of \eqref{laec}. On the other hand we know that, for every $M>0$,
\begin{equation}\label{thisisnew}
\widetilde{v}_{2,M}(t):= M e^{-t\left(\frac{N-2s}{2}\right)},\quad  t\in\mathbb{R},
\end{equation}
is a solution of the homogeneous equation $\mathcal{T}\cdot + \mathcal{A}_{s,N} \cdot =0$ in $\mathbb{R}$. We choose now
\begin{equation}\label{el-epsilon}
\max\left\{\varepsilon_1,\, \frac{\alpha(N-2s)}{N+2s+2\alpha}\right\}<\varepsilon<\frac{(\alpha+2s)(N-2s)}{N+2s+2\alpha}\, (<N),
\end{equation}
that in particular implies 
\begin{equation}\label{cesar}
-N<\widetilde{\mu}_1-\frac{N-2s}{2}<-N+2s\, (<2s),
\end{equation}
and also 
$$c\left(\widetilde{\mu}_1-\frac{N-2s}{2}\right)<0.$$
Therefore, for every $M>0$, the function
$$\widetilde{u}_{1,M}(t):=\widetilde{v}_{2,M}(t)+ \widetilde{v}_1(t),\quad  t\in\mathbb{R},$$
satisfies \eqref{laec}. Moreover choosing $M>0$ such that, by Proposition \ref{decaimiento},  $u(t)\leq \widetilde{u}_{1,M}(t)$ for $|t|<R$, by  \eqref{gripe} and comparison, it follows in particular that
$$u(t)\leq \widetilde{u}_{1,M}\leq M e^{-t\left(\frac{N-2s}{2}\right)}\mbox{ when $t>0$,}$$
as wanted.
\end{proof}

\begin{remark}\label{extranew}
\begin{itemize}
\item[i)]{To obtained the desirable bound for the solutions of \eqref{eq-uve-buena} in the case $t\leq 0$, the proof follows as the one done before for $t>0$ by using now the fact that $u(t)\leq e^{-t(\varepsilon+\beta)}$ for some $\varepsilon>\varepsilon_2$ (see \eqref{trapera}) and considering $\widetilde{v}_{2,M}(t):= M e^{-t\beta}$  instead of \eqref{thisisnew} with $\beta$ defined in \eqref{beta} and some $M>0$. We highlight that the previous function also satisfies $\mathcal{T}\cdot + \mathcal{A}_{s,N} \cdot =0$ in $\mathbb{R}$ because the constants function are, trivially, $s$-harmonic in all the space. The conclusion follows because we get 
$$u(t)\leq M e^{-t\beta}+\frac{1}{c\left(-\widetilde{\mu}_1-\frac{N-2s}{2}\right)}  e^{-t\, \widetilde{\mu}_1},\quad t\in\mathbb{R},$$
with $\widetilde{\mu}_1$ verifying \eqref{cesar}.}

\item[ii)] We show up that, from the proof presented above, it can be deduced that the estimate \eqref{cota} is also true for every regular solution of \eqref{eq-uve-buena}, that is, not only for the one given by Theorem \ref{existencia1}.

\item[iii)] We notice here that in \cite{Rupert} the explicit value
$$c(\mu)=4^{s}\frac{\Gamma\left(\frac{-\mu+2s}{2}\right)\Gamma\left(\frac{N+\mu}{2}\right)}{\Gamma\left(\frac{N+\mu-2s}{2}\right)\Gamma\left(-\frac{\mu}{2}\right)},$$
can be deduce for the range $-N+2s<\mu<0$. From the previous expression is clear that
$$c(\beta)=4^{s}\frac{\Gamma^2\left(\frac{N+2s}{4}\right)}{\Gamma^2\left(\frac{N-2s}{4}\right)}>0,$$
with $\beta\in (-N+2s,0)$ defined in \eqref{beta}. Thus, since $\mathcal {A}_{s,N}=c(\beta),$
where $\mathcal {A}_{s,N}$ was given in \eqref{a}, the previous observation is an alternative way to prove Proposition \ref{prop-kernel} i).
\end{itemize}
\end{remark}

We are now in position to prove our second main Theorem 
\begin{proof}[Proof Theorem 1.2] 
Let us consider 
$$u(r):=r^{-\frac{N-2s}{2}}v(r),$$
where $v(r)$ was obtained in Theorem \ref{existencia1}.

a) The existence part is direct if $\alpha>0$ since the previous function $u(r)$ is, clearly, a classical solution of \eqref{Henon} for $p=p^{*}_{\alpha,s}$.

{b) If $0>\alpha>-2s$ the solution are not regular due to the singularity at zero, so we need to argue in a different way. 
First of all we prove that $u$ is a {\it strong} solution of \eqref{Henon} with $p=p^{*}_{\alpha,s}$ where, here, the notion of {\it strong} solutions is the one that appears in \cite[Definition 2.4]{Bras1}, that is, the functions for which the principal 
value converges in $L_{loc}^1(\mathbb{R}^{N})$. Let us consider $x\in K$, where $K$ is a compact subset. We define
$$g_\epsilon(x):= a_{N,s}\int_{\mathbb{R}^{N}\setminus B_\varepsilon} \frac{u(x)-u(y)}{|x-y|^{N+2s}} dy,$$
and
$$g(x):=u(x)^{p^*_{\alpha,s}}|x|^\alpha.$$
By the fact that $u\in C^{1,\gamma}(\mathbb{R}^{N}\setminus\{0\})\cap L^{\infty}(\mathbb{R}^{N})$, $\gamma>2s-1$, (see Lemma \ref{weak} and \eqref{collar}), it can be proved that 
$$\mbox{$|g_\epsilon(x)|\leq C(K,\|u\|_{L^{\infty}}, \|u\|_{C^{1,\gamma}}):=h$ for every $\epsilon>0$ and $x\in K$.}$$
Thus, since $h \in L^1_{loc}(\mathbb{R}^N)$ and $g_{\epsilon}(x)\to g(x)$, $x\in K$, we can apply the Dominate Convergent theorem to get that $g_\epsilon \to g$  in $L^1_{loc}(\mathbb{R}^{N})$ obtaining that $u$ is a {\it strong} solution of \eqref{Henon} when $p=p^{*}_{\alpha,s}.$}  

{Finally we notice that  $u (-\Delta)^s u=u^{p^*_{\alpha,s}+1} |x|^\alpha\in L^1(\mathbb{R}^{N})$ because $u$ is fast decay (see \eqref{collar}) and $\alpha>-2s$. Then, by the Plancherell identity and \cite[Proposition 3.4]{guida} we get
$$\int_{\mathbb{R}^{N}} \frac{(u(x)-u(y))^2}{|x-y|^{N+2s}} dy<\infty.$$
Since $u$ is locally bounded we have that $u\in H^s_{loc}(\mathbb{R}^{N})$ {($u\in H^s(\mathbb{R}^{N})$  in the case $N\geq 4$)} and, therefore, by \cite[Corollary 2.7]{{Bras1}} we conclude that $u$ is a also a {\it weak} solution of \eqref{Henon} with $p=p^*_{\alpha,s}$.}
\\

c) For the lower bound of $u$. We observe that, since $(-\Delta)^s u\geq 0$ in $\mathbb{R}^{N}\setminus \{0\}$,
by  \cite[Proposition 3.4 and Proposition 3.5]{Bras}  there exists a sub-solution with the desired lower bound. Thus by comparison we get that the estimate is also true for $u$, that is, $u(x)\geq c_1 |x|^{-N+2s}.$
\\Using the decay estimate obtained for $v$ in Proposition \ref{46} clearly we also obtain the upper bound of c) (see \eqref{collar}).

\end{proof}

{\bf Acknowledgements.}
B. B. was partially supported by AEI grant MTM2016-80474-P.
A. Q. was partially supported by Fondecyt Grant No. 1151180 and Programa Basal, CMM. U. de Chile


\end{document}